\providecommand{\U}[1]{\protect\rule{.1in}{.1in}}
\providecommand{\U}[1]{\protect\rule{.1in}{.1in}}
\newtheorem{theorem}{Theorem}
\newtheorem{corollary}[theorem]{Corollary}
\newtheorem{definition}[theorem]{Definition}
\newtheorem{example}[theorem]{Example}
\newtheorem{notation}[theorem]{Notation}
\newtheorem{proposition}[theorem]{Proposition}
\newtheorem{remark}[theorem]{Remark}
\newenvironment{proof}[1][Proof]{\textbf{#1.} }{\ \rule{0.5em}{0.5em}}
\begin{document}

\title{Weil Diffeology I:\\Classical Differential Geometry}
\author{Hirokazu NISHIMURA\\Institute of Mathematics, University of Tsukuba\\Tsukuba, Ibaraki 305-8571\\Japan}
\maketitle

\begin{abstract}
Topos theory is a category-theoretical axiomatization of set theory. Model
categories are a category-theoretical framework for \textit{abstract} homotopy
theory. They are complete and cocomplete categories endowed with three classes
of morphisms (called \textit{fibrations}, \textit{cofibrations} and
\textit{weak} \textit{equivalences)} satisfying certain axioms. We would like
to present an abstract framework for classical differential geometry as an
extension of topos theory, hopefully comparable with model categories for
homotopy theory. Functors from the category $\mathfrak{W}$\ of \textit{Weil
algebras} to the category $\mathbf{Sets}$\ of sets are called \textit{Weil
spaces} by Wolfgang Bertram and form the \textit{Weil topos} after Eduardo J.
Dubuc. The Weil topos is endowed intrinsically with the \textit{Dubuc
functor}, a functor from a larger category $\widetilde{\mathfrak{W}}$\ of
\textit{cahiers algebras} to the Weil topos standing for the incarnation of
each algebraic entity of $\widetilde{\mathfrak{W}}$\ in the Weil topos. The
Weil functor and the canonical ring object are to be defined in terms of the
Dubuc functor. The principal objective in this paper is to present a
category-theoretical axiomatization of the Weil topos with the Dubuc functor
intended to be an adequate framework for axiomatic classical differential
geometry. We will give an appropriate formulation and a rather complete proof
of a generalization of the familiar and desired fact that the tangent space of
a \textit{microlinear} Weil space is a module over the canonical ring object.

\end{abstract}

\section{\label{s0}Introduction}

Differential geometry usually exploits not only the techniques of
differentiation but also those of integration. In this paper we would like to
use the term "\textit{differential geometry}" in its literal sense, that is,
\textit{genuinely} differential geometry, which is vast enough as to encompass
a large portion of the theory of connections and the core of the theory of Lie
groups. Now we know well that there is a horribly deep and overwhelmingly
gigantic valley between differential calculus of the 17th and 18th centuries
(that is to say, that of the good old days of Newton, Leibniz, Lagrange,
Laplace, Euler and so on) and that of our modern age since the 19th century
when Angustin Louis Cauchy was active. The former exquisitely resorts to
nilpotent infinitesimals, while the latter grasps differentiation in terms of
limits by using so-called $\varepsilon-\delta$ arguments formally.
Differential geometry based on the latter style of differentiation is
generally called \textit{smootheology}, while we propose that differential
geometry based on the former style of differentiation might be called
\textit{Weilology}.

As is well known, the category of topological spaces and continuous mappins is
not cartesian closed. The classical example of a \textit{convenient category}
of topological spaces for working topologisits was suggested by Norman
Steenrod \cite{ste1}\ in the middle of the 1960s, namely, the category of
compactly generated spaces. Now the category of finite-dimensional smooth
manifolds and smooth mappings is not cartesian closed, either. Convenient
categories for smootheology have been proposed by several authors in several
corresponding forms. Among them Souriau's \cite{so1}\ approach based upon the
category $\mathfrak{O}$\ of open subsets $O$'s\ of $\mathbf{R}^{n}$'s\ and
smooth mappings between them has developed into a galactic volume of
\textit{diffeology}, for which the reader is referred to \cite{ig1}. A
\textit{diffeological space} is a set $X$\ endowed with a subset
$\mathcal{D}\left(  O\right)  \subseteq X^{O}$\ for each $O\in\mathfrak{O}%
$\ such that, for any morphism $f:O\rightarrow O^{\prime}$ in $\mathfrak{O}%
$\ and any $\gamma\in\mathcal{D}\left(  O^{\prime}\right)  $, we have
$\gamma\circ f\in\mathcal{D}\left(  O\right)  $. A diffeological map between
diffeological spaces $\left(  X,\mathcal{D}\right)  $\ and $\left(
X,\mathcal{D}^{\prime}\right)  $\ is a mapping $f:X\rightarrow X^{\prime}%
$\ such that, for any $O\in\mathfrak{O}$\ and any $\gamma\in\mathcal{D}\left(
O\right)  $, we have $f\circ\gamma\in\mathcal{D}^{\prime}\left(  O\right)  $.

Roughly speaking, there are two approaches to geometry in representing spaces,
namely, \textit{contravariant} (\textit{functional}) and \textit{covariant}
(\textit{parameterized}) ones, for which the reader is referred, e.g., to
Chapter 3 of \cite{pa1}\ as well as \cite{ni9} and \cite{ni10}. Diffeology
finds itself in the covariant realm. The contravariant approach boils down
spaces to their function algebras. We are now accustomed to admitting
\textit{all} algebras to stand for \textit{abstract} spaces in some way or
other, whatever they may be. This is a long tradition of algebraic geometry
since as early as Alexander Grothendieck. Now we are ready to acknowledge any
functor $\mathfrak{O}^{\mathrm{op}}\rightarrow\mathbf{Sets}$\ as an
\textit{abstract }diffeological space. Then it is pleasant to enjoy

\begin{theorem}
The category of abstract diffeological spaces and natural transformations
between them is a topos.
\end{theorem}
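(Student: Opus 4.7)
The plan is to reduce the statement to the well-known fact that for any small category $\mathcal{C}$, the presheaf category $\mathbf{Sets}^{\mathcal{C}^{\mathrm{op}}}$ is an (elementary, indeed Grothendieck) topos. Since abstract diffeological spaces are by definition the objects of $\mathbf{Sets}^{\mathfrak{O}^{\mathrm{op}}}$, and natural transformations are by definition the morphisms, everything will follow once smallness of $\mathfrak{O}$ is verified and the standard topos structure is exhibited.

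First I would deal with the set-theoretic preliminary: $\mathfrak{O}$ is essentially small, since for each fixed $n$ the open subsets of $\mathbf{R}^{n}$ form a set (a subset of $\mathcal{P}(\mathbf{R}^{n})$), and the coproduct over $n\in\mathbf{N}$ is still a set, while the hom-sets are subsets of $(O')^{O}$. Hence without loss of generality $\mathfrak{O}$ may be treated as small and the classical presheaf machinery applies.

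Next I would outline the three things that need to be checked for $\mathbf{Sets}^{\mathfrak{O}^{\mathrm{op}}}$ to be a topos in the elementary sense. Finite limits (and in fact all small limits and colimits) are computed objectwise, inheriting completeness from $\mathbf{Sets}$. The exponential $G^{F}$ of two presheaves is defined by the Yoneda-forced formula
\[
G^{F}(O)=\mathrm{Nat}\bigl(\mathrm{Hom}_{\mathfrak{O}}(-,O)\times F,\,G\bigr),
\]
with the obvious restriction along morphisms of $\mathfrak{O}$; a routine verification shows $\mathrm{Hom}(H\times F,G)\cong\mathrm{Hom}(H,G^{F})$ naturally in $H$. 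The subobject classifier is taken to be the presheaf $\Omega$ with $\Omega(O)$ the set of sieves on $O$ in $\mathfrak{O}$, together with the map $\mathrm{true}:1\to\Omega$ picking out the maximal sieve; that subobjects of a presheaf $F$ correspond to natural transformations $F\to\Omega$ is the standard Lawvere--Tierney argument.

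I do not anticipate a genuine obstacle, since the entire assertion is an instance of an extremely general theorem; the only point that demands any care is the set-theoretic one above, namely ensuring that $\mathfrak{O}$ really is (equivalent to) a small category so that $\mathrm{Nat}(-,-)$ and $\{\text{sieves on }O\}$ are genuine sets rather than proper classes. Once that has been dispatched, the verification of the topos axioms is entirely formal and proceeds exactly as in any standard reference on presheaf toposes.
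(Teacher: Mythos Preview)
Your proposal is correct and matches the paper's approach: the paper does not give a proof at all but simply states the theorem as a pleasant and well-known fact (and later, for the analogous statement about $\mathbf{Weil}$, cites Chapter~1 of Mac~Lane--Moerdijk). Your argument supplies exactly the standard presheaf-topos reasoning the paper is implicitly invoking, with the appropriate attention to smallness of $\mathfrak{O}$.
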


Turning to Weilology, a space should be represented as a functor
$\mathbf{Inf}^{\mathrm{op}}\rightarrow\mathbf{Sets}$, where $\mathbf{Inf}%
$\ stands for the category of nilpotent infinitesimal spaces. Since our creed
tells us that the category $\mathbf{Inf}^{\mathrm{op}}$\ is equivalent to
$\mathfrak{W}$, a space should be no other than a functor $\mathfrak{W}%
\rightarrow\mathbf{Sets}$, for which Wolfgang Bertram \cite{ber3}\ has coined
the term "\textit{Weil space}". To be sure, we have

\begin{theorem}
The category of Weil spaces and natural transformations between them is a topos.
\end{theorem}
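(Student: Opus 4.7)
The plan is to recognize that the category of Weil spaces is by definition the functor category $\mathbf{Sets}^{\mathfrak{W}}$, or equivalently the presheaf category on $\mathfrak{W}^{\mathrm{op}}$, and then to invoke the standard theorem of topos theory asserting that presheaves on any small category form a (Grothendieck) topos. The proof is then almost completely parallel to the proof of the previous theorem about abstract diffeological spaces.

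The first step is to dispose of the size issue. A Weil algebra is a finite-dimensional commutative $\mathbb{R}$-algebra of the form $\mathbb{R}\oplus\mathfrak{m}$ with $\mathfrak{m}$ a nilpotent maximal ideal, and the isomorphism classes of such algebras form a set (bounded, say, by dimension and multiplication table up to isomorphism). Hence $\mathfrak{W}$ is essentially small, and we may replace it by a small skeleton without affecting the functor category up to equivalence. This legitimizes the subsequent constructions.

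Next one verifies the three pieces of structure that promote $\mathbf{Sets}^{\mathfrak{W}}$ to a topos. Finite limits and arbitrary colimits are computed pointwise from the corresponding (co)limits in $\mathbf{Sets}$, so completeness and cocompleteness are immediate. Exponentials are given by the usual presheaf-exponential formula
\[
G^{F}(W)=\mathrm{Nat}\bigl(\mathfrak{W}(W,-)\times F,\,G\bigr),
\]
whose naturality in $W$ follows from precomposition, and whose universal property follows from the Yoneda lemma together with the pointwise cartesian closedness of $\mathbf{Sets}$. The subobject classifier is the functor $\Omega\colon\mathfrak{W}\to\mathbf{Sets}$ sending $W$ to the set of cosieves on $W$ in $\mathfrak{W}$ (equivalently, sieves in $\mathfrak{W}^{\mathrm{op}}$), with the map $\mathrm{true}\colon 1\to\Omega$ picking out the maximal cosieve at each stage; characteristic maps are then defined as in any presheaf topos by taking, for a subfunctor $F'\hookrightarrow F$ and an element $x\in F(W)$, the cosieve of those $f\colon W\to W'$ such that $F(f)(x)\in F'(W')$.

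The only genuinely non-formal point is bookkeeping around the direction of the arrows, since here the representable functors are the \emph{covariant} ones $\mathfrak{W}(W,-)$ rather than contravariant representables, which dualizes the usual notion of sieve into that of cosieve in the definition of $\Omega$. Once that convention is fixed, every axiom of an elementary topos is verified by the same routine check one carries out for ordinary presheaf toposes, and the theorem follows. I do not anticipate any real obstacle beyond this dualization and the smallness step above.
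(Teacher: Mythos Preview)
Your proposal is correct and is precisely the standard argument that any presheaf category $\mathbf{Sets}^{\mathcal{C}}$ on a small category is a topos. The paper itself does not spell out a proof at all: it simply records the result as well known and refers the reader to Chapter~1 of Mac Lane--Moerdijk, \emph{Sheaves in Geometry and Logic}. Your sketch is exactly what one finds there (modulo the covariant/contravariant bookkeeping you flag), so there is nothing to compare---you have supplied the details the paper omits.
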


\section{\label{s1}Cahiers Algebras}

Unless stated to the contrary, our base field is assumed to be $\mathbf{R}%
$\ (real numbers) throughout the paper, so that we will often say "Weil
algebra" simply in place of Weil $\mathbf{R}$-algebra". For the exact
definition of a Weil algebra, the reader is referred to \S I.16 of \cite{koc1}.

\begin{notation}
We denote by $\mathfrak{W}$\ the category of Weil algebras.
\end{notation}

\begin{remark}
$\mathbf{R}$ is itself a Weil algebra, and it is an initial object in the
category $\mathfrak{W}$.
\end{remark}

\begin{definition}
An $\mathbf{R}$-algebra isomorphic to an $\mathbf{R}$-algebra of the form
$\mathbf{R}\left[  X_{1},...,X_{n}\right]  \otimes W$ with $\mathbf{R}\left[
X_{1},...,X_{n}\right]  \ $being the polynomial algebra over $\mathbf{R}\ $in
indeterminates $X_{1},...,X_{n}$ (possibly $n=0$, when the definition
degenerates to Weil algebras) and $W$\ being a Weil algebra is called a
\underline{cahiers algebra}.
\end{definition}

\begin{remark}
This definition of a cahiers algebra is reminiscent of that in the definition
of Cahiers topos, where we consider a product of a Cartesian space
$\mathbf{R}^{n}$\ and a formal dual of a Weil algebra.
\end{remark}

\begin{notation}
We denote by $\widetilde{\mathfrak{W}}$\ the category of cahiers algebras.
\end{notation}

\begin{remark}
The category $\mathfrak{W}$\ is a full subcategory of the category
$\widetilde{\mathfrak{W}}$. Both are closed under the tensor product
\ $\otimes$.
\end{remark}

\begin{notation}
We will use such a self-explanatory notation as $Z\rightarrow X/\left(
X^{2}\right)  $\ or $X/\left(  X^{2}\right)  \leftarrow Z$ for the morphism
$\mathbf{R}\left[  Z\right]  \rightarrow\mathbf{R}\left[  X\right]  /\left(
X^{2}\right)  $ assigning $X$\ modulo $\left(  X^{2}\right)  $ to $Z$.
\end{notation}

\section{\label{s2}Weil Spaces}

\begin{definition}
A \underline{Weil space} is simply a functor $F$\ from the category
$\mathfrak{W}$\ of Weil algebras to the category $\mathbf{Sets}$\ of sets. A
\underline{Weil morphism} from a Weil space $F$\ to another Weil space $G$\ is
simply a natural transformation from the functor $F$ to the functor $G$.
\end{definition}

\begin{remark}
The term "Weil space" has been coined in \cite{ber3}.
\end{remark}

\begin{example}
The Weil prolongation of a "manifold" in its broadest sense (cf. \cite{ber1})
by a Weil algebra was fully discussed by Bertram and Souvay, for which the
reader is cordially referred to \cite{ber2}. We are happy to know that any
manifold naturally gives rise to its associated Weil space, which can be
regarded as a functor from the category of manifolds to the category
$\mathbf{Weil}$. It should be stressed without exaggeration that the functor
is not full in general, for which the reader is referred to exuberantly
readable \S 1.6 (discussion) of \cite{ber3}.
\end{example}

\begin{example}
The Weil prolongation $A\otimes W$\ of a $C^{\infty}$-algebra $A$\ by a Weil
algebra $W$\ was discussed in Theorem III.5.3 of \cite{koc1}. We are happy to
know that any $C^{\infty}$-algebra naturally gives rise to its associated Weil space.
\end{example}

\begin{notation}
We denote by $\mathbf{Weil}$\ the category of Weil spaces and Weil morphisms.
\end{notation}

\begin{remark}
Dubuc \cite{du1}\ has indeed proposed the topos $\mathbf{Weil}$\ as the first
step towards the \textit{well adapted} model theory of \textit{synthetic
differential geometry}, but we would like to contend somewhat radically that
the topos $\mathbf{Weil}$\ is verbatim the \textit{central object of study in
classical differential geometry}
\end{remark}

It is well known (cf. Chapter 1 of \cite{ma1}) that

\begin{theorem}
The category $\mathbf{Weil}$\ is a topos. In particular, it is locally
cartesian closed.
\end{theorem}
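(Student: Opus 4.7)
The plan is to recognize $\mathbf{Weil}$ as a presheaf-type category on an essentially small category and then invoke the standard theorem that any such category is a Grothendieck topos, together with the equally standard fact that every topos is locally cartesian closed.

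First, I would dispense with the set-theoretic size issue. Every Weil algebra is finite-dimensional over $\mathbf{R}$, so the class of Weil algebras has only a set of isomorphism classes; choosing a skeleton $\mathfrak{W}_{0}\subseteq\mathfrak{W}$, the inclusion is an equivalence of categories and induces an equivalence $\mathbf{Weil}=\mathbf{Sets}^{\mathfrak{W}}\simeq\mathbf{Sets}^{\mathfrak{W}_{0}}$. Setting $\mathcal{C}=\mathfrak{W}_{0}^{\mathrm{op}}$, this last is the presheaf category $\mathbf{Sets}^{\mathcal{C}^{\mathrm{op}}}$ on a small category $\mathcal{C}$.

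Second, I would invoke the classical theorem (Chapter I of \cite{ma1}, already cited in the statement) that any such presheaf category is a Grothendieck topos. The verification amounts to three off-the-shelf constructions: finite limits and all colimits exist and are computed pointwise in $\mathbf{Sets}$; exponentials exist, with $G^{F}$ given on objects by $W\mapsto \mathrm{Nat}(\mathrm{Hom}(W,-)\times F,G)$ via the Yoneda lemma; and a subobject classifier $\Omega$ is supplied by the functor of sieves, namely $W\mapsto \{\text{sieves on }W\text{ in }\mathfrak{W}_{0}\}$. None of these constructions requires any special input from the theory of Weil algebras beyond the essential smallness already established.

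Third, local cartesian closedness is then automatic. In any elementary topos $\mathcal{E}$, the pullback functor $f^{\ast}:\mathcal{E}/Y\rightarrow\mathcal{E}/X$ along any morphism $f:X\rightarrow Y$ admits a right adjoint $\Pi_{f}$, constructed from the cartesian closed structure of $\mathcal{E}$ together with the subobject classifier (the dependent product); equivalently one notes that each slice $\mathbf{Weil}/X$ is itself a presheaf topos on the category of elements of $X$, and that pullback along $f$ coincides with restriction along the evident functor of categories of elements, which has a right Kan extension as its right adjoint. The main obstacle, honestly, is nothing more than this bookkeeping; the substantive content of the statement is concentrated entirely in the essential smallness of $\mathfrak{W}$, after which everything is standard topos theory.
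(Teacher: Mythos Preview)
Your proposal is correct and follows essentially the same approach as the paper: the paper simply declares the result ``well known'' and cites Chapter~1 of \cite{ma1}, which is precisely the presheaf-topos theorem you spell out. If anything, you are more careful than the paper in making explicit the essential smallness of $\mathfrak{W}$ (via finite-dimensionality of Weil algebras) before invoking the standard machinery.
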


\begin{remark}
Dubuc \cite{du1} has called the category $\mathbf{Weil}$ the \underline{Weil
topos}.
\end{remark}

\begin{remark}
The category of Fr\"{o}licher spaces is indeed cartesian closed, but it is not
locally cartesian closed. On the other hand, the category of diffeological
spaces is locally cartesian closed. For these matters, the reader is referred
to \cite{st1}. It was shown by Baez and Hoffnung \cite{ba1}\ that
diffeological spaces as well as Chen spaces are no other than concrete sheaves
on concrete sites.
\end{remark}

\begin{definition}
The \underline{Weil prolongation} $F^{W}$\ of a Weil space $F$\ by a Weil
algebra $W$\ is simply the composition of the functor $\left(  \_\right)
\otimes W:\mathfrak{W}\rightarrow\mathfrak{W}$ and the functor $F:\mathfrak{W}%
\rightarrow\mathbf{Sets}$, namely%
\[
F\left(  \left(  \_\right)  \otimes W\right)  :\mathfrak{W}\rightarrow
\mathbf{Sets}%
\]
which is surely a Weil space.
\end{definition}

\begin{remark}
$\left(  \_\right)  ^{\left(  \cdot\right)  }$ assigning $F^{W}$\ to each
$\left(  W,F\right)  \in\mathfrak{W}\times\mathbf{Weil}$\ can naturally be
regarded as a bifunctor $\mathfrak{W}\times\mathbf{Weil}\rightarrow
\mathbf{Weil}$.
\end{remark}

Trivially we have

\begin{proposition}
\label{p2.1}For any Weil space $F$\ and any Weil algebras $W_{1}$\ and $W_{2}%
$, we have%
\[
\left(  F^{W_{1}}\right)  ^{W_{2}}=F^{W_{1}\otimes W_{2}}%
\]

\end{proposition}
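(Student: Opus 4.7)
The plan is to unfold the definition of Weil prolongation on both sides and reduce the claim to the canonical associativity (and commutativity) of the tensor product on the category $\mathfrak{W}$ of Weil algebras.

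First I would evaluate each side at an arbitrary Weil algebra $V \in \mathfrak{W}$. By the definition of Weil prolongation we have $F^{W_1}(V) = F(V \otimes W_1)$, and hence
\[
(F^{W_1})^{W_2}(V) \;=\; F^{W_1}(V \otimes W_2) \;=\; F\bigl((V \otimes W_2) \otimes W_1\bigr),
\]
while on the other side $F^{W_1 \otimes W_2}(V) = F\bigl(V \otimes (W_1 \otimes W_2)\bigr)$. Next I would invoke the canonical associativity and symmetry isomorphisms of the tensor product of commutative $\mathbf{R}$-algebras to identify $(V \otimes W_2) \otimes W_1$ with $V \otimes (W_1 \otimes W_2)$, and then apply the functor $F$ to this isomorphism to get equality (under the standard convention that $\otimes$ is strictly associative and symmetric) or at worst a natural isomorphism of Weil spaces.

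Finally I would check naturality in $V$: for any morphism $\varphi: V \to V'$ in $\mathfrak{W}$, the two maps $F(\varphi \otimes W_1 \otimes W_2)$ obtained from the two sides agree, because the associativity/symmetry isomorphisms for $\otimes$ are themselves natural in each slot. This gives equality of the two functors $\mathfrak{W} \to \mathbf{Sets}$, i.e.\ of the two Weil spaces.

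The only mildly delicate point is the use of ``$=$'' rather than ``$\cong$'': strictly speaking the two composite tensors are naturally isomorphic rather than literally equal, so the statement is to be read modulo the canonical coherence isomorphisms of the symmetric monoidal structure on $\mathfrak{W}$. Since the paper explicitly notes that $\mathfrak{W}$ is closed under $\otimes$ and uses the tensor product freely, this identification is harmless, and no further subtlety arises — the proof is essentially a one-line application of associativity of $\otimes$.
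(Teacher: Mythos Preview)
Your proposal is correct and matches the paper's approach: the paper simply prefaces the proposition with ``Trivially we have'' and gives no proof at all, so your unfolding of the definition $F^{W}=F((\_)\otimes W)$ together with the associativity and symmetry of $\otimes$ is exactly the intended (and only possible) argument. Your remark about ``$=$'' versus ``$\cong$'' is apt but, as you say, harmless under the standard coherence conventions the paper implicitly adopts.
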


\begin{remark}
The so-called Yoneda embedding%
\[
\mathbf{y}:\mathfrak{W}^{\mathrm{op}}\rightarrow\mathbf{Weil}%
\]
is full and faithful. The famous Yoneda lemma claims that%
\begin{equation}
F\left(  \_\right)  \cong\mathrm{Hom}_{\mathbf{Weil}}\left(  \mathbf{y}\left(
\_\right)  ,F\right)  \label{2.1}%
\end{equation}
for any Weil space $F$. The Yoneda embedding can be extended to%
\[
\widetilde{\mathbf{y}}:\widetilde{\mathfrak{W}}^{\mathrm{op}}\rightarrow
\mathbf{Weil}%
\]
by%
\[
\widetilde{\mathbf{y}}\left(  A\right)  =\mathrm{Hom}_{\mathbf{R-Alg}}\left(
A,\_\right)
\]
for any $A\in\widetilde{\mathfrak{W}}$, where $\mathbf{R-Alg}$\ denotes the
category of $\mathbf{R}$-algebras.
\end{remark}

\begin{remark}
Given Weil algebras $W_{1}$\ and $W_{2}$, we have%
\begin{equation}
\mathbf{y}W_{1}\times\mathbf{y}W_{2}\cong\mathbf{y}\left(  W_{1}\otimes
W_{2}\right)  \label{2.2}%
\end{equation}

\end{remark}

\begin{remark}
As is well known (cf. \S 8.7 of \cite{aw1}), given Weil spaces $F$\ and $G$,
their exponential $F^{G}$\ in $\mathbf{Weil}$\ is provided by%
\begin{equation}
\mathrm{Hom}_{\mathbf{Weil}}\left(  \mathbf{y}\_\times G,F\right)  \label{2.3}%
\end{equation}

\end{remark}

\begin{proposition}
\label{p2.2}For any Weil space $F$\ and any Weil algebra $W$, $F^{W}$ and
$F^{\mathbf{y}W}$\ are naturally isomorphic, namely,
\[
F^{W}\cong F^{\mathbf{y}W}%
\]
where the left-hand side stands for the Weil prolongation $F^{W}$\ of $F$\ by
$W$,\ while the right-hand side stands for the exponential $F^{\mathbf{y}W}%
$\ in the topos $\mathbf{Weil}$.
\end{proposition}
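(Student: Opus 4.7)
The plan is to compute both sides pointwise at an arbitrary Weil algebra $V \in \mathfrak{W}$ and to string together the three tools already provided in the preceding remarks: the formula (\ref{2.3}) for the exponential, the identity (\ref{2.2}) expressing the tensor product as a product under $\mathbf{y}$, and the Yoneda lemma (\ref{2.1}).

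Concretely, I would first unfold the right-hand side using (\ref{2.3}) to obtain
\[
F^{\mathbf{y}W}(V) \;=\; \mathrm{Hom}_{\mathbf{Weil}}\bigl(\mathbf{y}V \times \mathbf{y}W,\, F\bigr).
\]
Next I would apply the isomorphism (\ref{2.2}), which gives $\mathbf{y}V \times \mathbf{y}W \cong \mathbf{y}(V \otimes W)$, to rewrite this as $\mathrm{Hom}_{\mathbf{Weil}}(\mathbf{y}(V \otimes W),F)$. Then the Yoneda lemma (\ref{2.1}) identifies this hom-set with $F(V \otimes W)$, and by the defining formula of the Weil prolongation this is exactly $F^{W}(V)$. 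Thus at the level of objects of $\mathfrak{W}$ we have a chain of natural bijections
\[
F^{\mathbf{y}W}(V) \;\cong\; \mathrm{Hom}_{\mathbf{Weil}}\bigl(\mathbf{y}(V \otimes W),F\bigr) \;\cong\; F(V \otimes W) \;=\; F^{W}(V).
\]

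Finally I would verify that the composite bijection is natural in $V$, so that it assembles into an isomorphism of Weil spaces (functors $\mathfrak{W} \to \mathbf{Sets}$). Naturality in $V$ of each of the three ingredients is standard: (\ref{2.3}) is natural by construction of the exponential, (\ref{2.2}) is natural because $\mathbf{y}$ is a functor and the tensor product is a bifunctor, and Yoneda is natural in its representing object. I do not foresee any real obstacle here; the proposition is essentially a bookkeeping exercise that chains (\ref{2.1}), (\ref{2.2}) and (\ref{2.3}) together, and the only mildly delicate point is to make sure the isomorphism (\ref{2.2}) is used in the correct direction (tensor product of Weil algebras corresponds to product under the contravariant Yoneda embedding $\mathfrak{W}^{\mathrm{op}} \to \mathbf{Weil}$).
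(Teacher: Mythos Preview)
Your proposal is correct and follows essentially the same route as the paper: the paper's proof is exactly the chain $F^{\mathbf{y}W}=\mathrm{Hom}(\mathbf{y}\_\times\mathbf{y}W,F)\cong\mathrm{Hom}(\mathbf{y}(\_\otimes W),F)\cong F(\_\otimes W)=F^{W}$, invoking (\ref{2.3}), (\ref{2.2}) and (\ref{2.1}) in that order. The only cosmetic difference is that the paper works functorially with the placeholder $\_$ rather than fixing a test object $V$ and checking naturality afterward.
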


\begin{proof}
The proof is so simple as follows:%
\begin{align*}
&  F^{\mathbf{y}W}\\
&  =\mathrm{Hom}\left(  \mathbf{y}\_\times\mathbf{y}W,F\right) \\
&  \text{[(\ref{2.3})]}\\
&  \cong\mathrm{Hom}\left(  \mathbf{y}\left(  \_\otimes W\right)  ,F\right) \\
&  \text{[(\ref{2.2})]}\\
&  \cong F\left(  \_\otimes W\right) \\
&  \text{[(\ref{2.1})]}\\
&  =F^{W}%
\end{align*}

\end{proof}

\begin{corollary}
\label{cp2.2}Given a Weil algebra $W$\ together with Weil spaces $F$\ and $G$,
$\left(  F^{G}\right)  ^{W}$ and $\left(  F^{W}\right)  ^{G}$\ are naturally
isomorphic, namely,
\[
\left(  F^{G}\right)  ^{W}\cong\left(  F^{W}\right)  ^{G}%
\]

\end{corollary}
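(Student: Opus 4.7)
The plan is to reduce the claim to the ordinary exponential calculus inside the topos $\mathbf{Weil}$, using Proposition \ref{p2.2} as a bridge between Weil prolongation and internal exponentiation. Since the only non-trivial ingredient is the prolongation $(\_)^W$, and Proposition \ref{p2.2} identifies it with the exponential $(\_)^{\mathbf{y}W}$, the desired symmetry should follow from the commutativity of the binary product in $\mathbf{Weil}$.

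First I would rewrite the left-hand side by applying Proposition \ref{p2.2} with the Weil space $F^G$ in place of $F$, obtaining
\[
(F^G)^W \;\cong\; (F^G)^{\mathbf{y}W}.
\]
Next I would invoke the standard exponential law in a cartesian closed category (which $\mathbf{Weil}$ is, by the theorem that $\mathbf{Weil}$ is a topos) to get
\[
(F^G)^{\mathbf{y}W} \;\cong\; F^{\,G\times \mathbf{y}W}.
\]
Then, using the commutativity of the binary product $G\times \mathbf{y}W \cong \mathbf{y}W\times G$ and applying the exponential law once more in the opposite direction, I would obtain
\[
F^{\,G\times \mathbf{y}W} \;\cong\; F^{\,\mathbf{y}W\times G} \;\cong\; (F^{\mathbf{y}W})^G.
\]
Finally, applying Proposition \ref{p2.2} inside the $G$-exponent gives
\[
(F^{\mathbf{y}W})^G \;\cong\; (F^W)^G,
\]
and chaining these natural isomorphisms delivers the corollary.

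There is no genuine obstacle here; the whole argument is a purely formal manipulation that takes place in any cartesian closed category, and Proposition \ref{p2.2} is precisely what is needed to translate the ``external'' prolongation $(\_)^W$ into the ``internal'' exponential $(\_)^{\mathbf{y}W}$ so that these formal moves apply. The only point to be mildly careful about is that all the isomorphisms invoked are natural in $F$, $G$, and $W$, but this is built into the statement of Proposition \ref{p2.2} and the standard cartesian-closed structure of $\mathbf{Weil}$, so the composite isomorphism is natural as well.
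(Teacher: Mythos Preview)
Your proof is correct and follows essentially the same route as the paper: apply Proposition \ref{p2.2} to replace $(\_)^W$ by $(\_)^{\mathbf{y}W}$, use the cartesian-closed exponential law to swap the two exponents, and then apply Proposition \ref{p2.2} again. The paper simply compresses your middle three steps into the single line $(F^G)^{\mathbf{y}W}\cong(F^{\mathbf{y}W})^G$, which is exactly the standard cartesian-closed identity you spelled out via $F^{G\times\mathbf{y}W}\cong F^{\mathbf{y}W\times G}$.
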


\begin{proof}
We have%
\begin{align*}
&  \left(  F^{G}\right)  ^{W}\\
&  \cong\left(  F^{G}\right)  ^{\mathbf{y}W}\\
&  \text{\lbrack by Proposition \ref{p2.2}]}\\
&  \cong\left(  F^{\mathbf{y}W}\right)  ^{G}\\
&  \cong\left(  F^{W}\right)  ^{G}\\
&  \text{\lbrack by Proposition \ref{p2.2}]}%
\end{align*}

\end{proof}

\begin{corollary}
For any Weil algebra $W$, the functor $\left(  \_\right)  ^{W}:\mathbf{Weil}%
\rightarrow\mathbf{Weil}$ preserves limits, particularly, products.
\end{corollary}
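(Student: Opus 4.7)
The plan is to reduce Weil prolongation by $W$ to a topos-theoretic exponential and then invoke the standard fact that right adjoints preserve limits.

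First I would invoke Proposition \ref{p2.2}, which supplies a natural isomorphism $F^{W}\cong F^{\mathbf{y}W}$ in $F$. This promotes the operation $(\_)^{W}:\mathbf{Weil}\to\mathbf{Weil}$ (a priori defined by precomposition with $(\_)\otimes W$ on $\mathfrak{W}$) to a functor naturally isomorphic to the exponential $(\_)^{\mathbf{y}W}$ taken inside the topos $\mathbf{Weil}$. Since natural isomorphism of functors preserves limit-preservation, it suffices to verify that $(\_)^{\mathbf{y}W}$ preserves limits.

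Next I would appeal to the fact, already used tacitly in the excerpt, that $\mathbf{Weil}$ is a topos and in particular cartesian closed (Theorem on $\mathbf{Weil}$ being a topos, together with the exponential formula (\ref{2.3})). In any cartesian closed category, the exponential functor $(\_)^{X}$ is by definition right adjoint to the product functor $(\_)\times X$. Applying this with $X=\mathbf{y}W$ shows that $(\_)^{\mathbf{y}W}$ has a left adjoint, namely $(\_)\times\mathbf{y}W$.

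Finally I would conclude by the standard categorical principle that right adjoints preserve all (small) limits; in particular products, equalizers, and pullbacks are preserved by $(\_)^{\mathbf{y}W}$, and hence by the naturally isomorphic functor $(\_)^{W}$. There is no real obstacle here: the whole content is packaged in Proposition \ref{p2.2}, which converts the extrinsic description of Weil prolongation into an intrinsic topos-theoretic exponential, after which the conclusion is formal. If anything, the only point deserving a line of comment is that the isomorphism of Proposition \ref{p2.2} is natural in $F$, so that limit-preservation genuinely transfers from $(\_)^{\mathbf{y}W}$ to $(\_)^{W}$.
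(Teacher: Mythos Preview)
Your proposal is correct and essentially identical to the paper's own proof: both invoke Proposition~\ref{p2.2} to identify $(\_)^{W}$ with the exponential $(\_)^{\mathbf{y}W}$, observe that this is right adjoint to $(\_)\times\mathbf{y}W$, and conclude by the standard fact that right adjoints preserve limits. The paper is simply terser, stating directly that $(\_)^{W}$ has left adjoint $(\_)\times\mathbf{y}W$ and citing Awodey, while you spell out the intermediate step through cartesian closedness and the transfer along the natural isomorphism.
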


\begin{proof}
Since the functor $\left(  \_\right)  ^{W}$\ is of its left adjoint $\left(
\_\right)  \times\mathbf{y}W$ (cf. Proposition 8.13 of \cite{aw1}), the
desired result follows readily from the well known theorem claiming that a
functor being of its left adjoint preserves limits (cf. Proposition 9.14 of
\cite{aw1}).
\end{proof}

\begin{notation}
We denote by $\mathbb{R}$\ the forgetful functor $\mathfrak{W}\rightarrow
\mathbf{Sets}$, which is surely a Weil space. It can be defined also as%
\[
\mathbb{R}=\widetilde{\mathbf{y}}\left(  \mathbf{R}\left[  X\right]  \right)
\]

\end{notation}

\begin{remark}
The Weil space $\mathbb{R}$\ is canonically regarded as an $\mathbf{R}%
$-algebra object in the category $\mathbf{Weil}$.
\end{remark}

\begin{remark}
Since $\mathbb{R}$\ is an $\mathbf{R}$-algebra object in the category
$\mathbf{Weil}$, we can define, after \S I.16 of \cite{koc1}, another
$\mathbf{R}$-algebra object $\mathbb{R}\otimes W$\ in the category
$\mathbf{Weil}$\ for any Weil algebra $W$.
\end{remark}

\begin{notation}
We denote by $\mathbf{R-Alg}\left(  \mathbf{Weil}\right)  $ the category of
$\mathbf{R}$-algebra objects in the category $\mathbf{Weil}$.
\end{notation}

\begin{proposition}
\label{p2.3}The functors%
\[
\mathbb{R}^{\mathbf{y}\left(  \_\right)  },\mathbb{R}\otimes\left(  \_\right)
:\mathfrak{W}\rightarrow\mathbf{R-Alg}\left(  \mathbf{Weil}\right)
\]
are naturally isomorphic.
\end{proposition}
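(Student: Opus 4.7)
The plan is to exhibit both functors as sending a Weil algebra $W$ to the Weil space $V \mapsto V\otimes W$ equipped with the natural tensor-product $\mathbf{R}$-algebra structure, and then read off the identity as the natural isomorphism.

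First, I would apply Proposition \ref{p2.2} to replace $\mathbb{R}^{\mathbf{y}W}$ by the Weil prolongation $\mathbb{R}^{W}$. Since $\mathbb{R}$ is the forgetful functor $\mathfrak{W}\rightarrow\mathbf{Sets}$, evaluation on a Weil algebra $V$ gives
\[
\mathbb{R}^{W}\left(V\right) = \mathbb{R}\left(V\otimes W\right) = V\otimes W,
\]
and the $\mathbf{R}$-algebra structure transported from $\mathbb{R}\in\mathbf{R}\text{-Alg}(\mathbf{Weil})$ through the exponential functor is just the tensor-product algebra structure on $V\otimes W$, because the exponential in a topos is computed pointwise on representables and preserves finite products.

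Second, I would unwind the construction of $\mathbb{R}\otimes W$ (as invoked from \S I.16 of \cite{koc1}). At each $V\in\mathfrak{W}$ this produces the $\mathbf{R}$-algebra $\mathbb{R}(V)\otimes W = V\otimes W$, once more with the tensor-product structure, since the construction $A\otimes W$ for an internal $\mathbf{R}$-algebra object $A$ is designed exactly to mimic the external tensor product with $W$ pointwise. Thus at each $V$ both functors yield the same algebra $V\otimes W$, and the candidate isomorphism is the identity on the underlying sets.

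Third, I would check naturality in $W$ and coherence with the algebra structure. A morphism $f:W_{1}\rightarrow W_{2}$ in $\mathfrak{W}$ induces $\mathrm{id}_{V}\otimes f:V\otimes W_{1}\rightarrow V\otimes W_{2}$ on both sides, so naturality squares trivially commute. The only genuine content is the bookkeeping matching the internal multiplication and unit of $\mathbb{R}^{\mathbf{y}W}$, obtained by exponentiating the multiplication and unit of $\mathbb{R}$, with those of $\mathbb{R}\otimes W$. I expect this step to be the main (though still routine) obstacle: one traces the isomorphism of Proposition \ref{p2.2} through the diagrams defining the algebra operations, using (\ref{2.2}) to identify $\mathbf{y}V\times\mathbf{y}W$ with $\mathbf{y}(V\otimes W)$ and the Yoneda lemma (\ref{2.1}) to rewrite the exponential as $\mathbb{R}(V\otimes W)$. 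Once this verification is carried out at the level of the algebra operations, the identity map $V\otimes W \to V\otimes W$ assembles into a natural isomorphism $\mathbb{R}^{\mathbf{y}(\_)} \cong \mathbb{R}\otimes(\_)$ of functors $\mathfrak{W} \rightarrow \mathbf{R}\text{-Alg}(\mathbf{Weil})$.
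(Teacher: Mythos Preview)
Your proposal is correct and follows essentially the same route as the paper: apply Proposition~\ref{p2.2} to identify $\mathbb{R}^{\mathbf{y}W}$ with the Weil prolongation $\mathbb{R}^{W}$, then evaluate at $W'$ to obtain $W'\otimes W$, which is also what $\mathbb{R}\otimes W$ gives pointwise. The paper's own proof is much terser---it records only the chain $\mathbb{R}^{\mathbf{y}W}(W')\cong\mathbb{R}^{W}(W')=W'\otimes W$ and leaves the identification with $(\mathbb{R}\otimes W)(W')$, the $\mathbf{R}$-algebra-structure matching, and the naturality in $W$ entirely implicit---so your additional bookkeeping on those points is a welcome elaboration rather than a deviation.
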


\begin{proof}
We have%
\begin{align*}
&  \mathbb{R}^{\mathbf{y}W}\left(  W^{\prime}\right) \\
&  \cong\mathbb{R}^{W}\left(  W^{\prime}\right) \\
&  \text{[By Proposition \ref{p2.2}]}\\
&  =W^{\prime}\otimes W
\end{align*}

\end{proof}

\section{\label{s3}Microlinearity}

Not all Weil spaces are susceptible to the techniques of classical
differential geometry, so that there should be a criterion by which we can
select decent ones.

\begin{definition}
A Weil space $F$\ is called \underline{microlinear} provided that a finite
limit diagram $\mathcal{D}$\ in $\mathfrak{W}$\ always yields a limit diagram
$F^{\mathcal{D}}$\ in $\mathbf{Weil}$.
\end{definition}

\begin{proposition}
\label{p3.1}We have the following:

\begin{enumerate}
\item The Weil space $\mathbb{R}$\ is microlinear.

\item The limit of a diagram of microlinear Weil spaces is microlinear.

\item Given Weil spaces $F$\ and $G$, if $F$\ is microlinear, then the
exponential $F^{G}$\ is also microlinear.
\end{enumerate}
\end{proposition}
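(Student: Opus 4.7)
The plan is to attack the three parts in order, relying throughout on the pointwise computation of limits in the presheaf topos $\mathbf{Weil}=\mathbf{Sets}^{\mathfrak{W}}$ together with the definitional identity $F^{W}(W')=F(W'\otimes W)$. For part 1, given a finite limit diagram $\mathcal{D}=(W_{i})_{i}$ in $\mathfrak{W}$ with vertex $W^{\ast}=\lim W_{i}$, I want the induced cone $(\mathbb{R}^{W^{\ast}}\to\mathbb{R}^{W_{i}})$ in $\mathbf{Weil}$ to be a limit cone. Evaluating at each $W'\in\mathfrak{W}$, this reduces, via pointwise limits, to showing that $(W'\otimes W_{i})_{i}$ has limit $W'\otimes W^{\ast}$ in $\mathbf{Sets}$, i.e.\ that $W'\otimes(-)$ preserves the finite limit $W^{\ast}=\lim W_{i}$. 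This follows from the flatness of $W'$ as an $\mathbf{R}$-module---every Weil algebra is finite-dimensional, hence free over $\mathbf{R}$---combined with the classical fact that tensoring against a flat module preserves finite limits.

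For part 2, suppose $F=\lim_{j}F_{j}$ with each $F_{j}$ microlinear, and let $\mathcal{D}=(W_{i})_{i}$ be a finite limit diagram in $\mathfrak{W}$ with vertex $W^{\ast}$. The pointwise formula gives $F^{W}(W')=F(W'\otimes W)=\lim_{j}F_{j}(W'\otimes W)=(\lim_{j}F_{j}^{W})(W')$, so $F^{W}=\lim_{j}F_{j}^{W}$ for every $W$. Therefore $F^{\mathcal{D}}=(F^{W_{i}})_{i}=(\lim_{j}F_{j}^{W_{i}})_{i}$, and by Fubini for limits its vertex is $\lim_{j}\lim_{i}F_{j}^{W_{i}}=\lim_{j}F_{j}^{W^{\ast}}=F^{W^{\ast}}$, where the middle equality uses the microlinearity of each $F_{j}$. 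Hence $F^{\mathcal{D}}$ is a limit diagram with the expected vertex.

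For part 3, Corollary \ref{cp2.2} furnishes a natural isomorphism $(F^{G})^{W}\cong(F^{W})^{G}$ in $W$, so the diagram $(F^{G})^{\mathcal{D}}$ is isomorphic to $((F^{W})^{G})_{W\in\mathcal{D}}$. Since $\mathbf{Weil}$ is cartesian closed, $(-)^{G}$ is right adjoint to $(-)\times G$ and therefore preserves all limits; applied to the limit diagram $(F^{W})_{W\in\mathcal{D}}$ supplied by microlinearity of $F$, it yields a limit diagram in $\mathbf{Weil}$, which is exactly $(F^{G})^{\mathcal{D}}$ up to isomorphism. The only genuinely non-formal ingredient in the entire proof is the flatness-based preservation step in part 1; parts 2 and 3 are purely diagrammatic, relying only on the pointwise computation of limits in $\mathbf{Weil}$, the Fubini principle, Corollary \ref{cp2.2}, and the general fact that right adjoints preserve limits.
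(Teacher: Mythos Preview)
Your proof is correct and follows essentially the same route as the paper's (very terse) argument. For part 1, the paper simply cites Proposition \ref{p2.3}, whose content is precisely your pointwise identification $\mathbb{R}^{W}(W')\cong W'\otimes W$; you then supply the step the paper leaves implicit, namely that $W'\otimes(-)$ preserves finite limits because $W'$ is free of finite rank over $\mathbf{R}$. Parts 2 and 3 match the paper exactly: commutation of limits (Fubini) for part 2, and Corollary \ref{cp2.2} together with the right-adjoint property of $(-)^{G}$ for part 3.
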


\begin{proof}
The first statement follows from Proposition \ref{p2.3}. The second statement
follows from the well-known fact that double limits commute. The third
statement follows from Corollary \ref{cp2.2}.
\end{proof}

It is easy to see that

\begin{proposition}
A Weil space $F$\ is microlinear iff the diagram%
\[
F\left(  W\otimes\mathcal{D}\right)
\]
is a limit diagram for any Weil algebra $W$\ and any finite limit diagram
$\mathcal{D}$\ of Weil algebras.
\end{proposition}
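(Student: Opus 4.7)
The plan is to reduce the equivalence to two well-known facts: (a) limits in the functor category $\mathbf{Weil}=\mathbf{Sets}^{\mathfrak{W}}$ are computed pointwise, and (b) the Weil prolongation is by definition $F^{W}=F(({\_})\otimes W)$, so that $F^{W}(W')=F(W'\otimes W)$ for any Weil algebras $W,W'$.

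First I would spell out what the condition $F^{\mathcal{D}}$ is a limit diagram in $\mathbf{Weil}$ means concretely. Given a finite limit diagram $\mathcal{D}$ in $\mathfrak{W}$, applying the bifunctor $({\_})^{({\_})}:\mathfrak{W}\times\mathbf{Weil}\to\mathbf{Weil}$ in its first slot (with $F$ fixed) produces a diagram $F^{\mathcal{D}}$ in $\mathbf{Weil}$. By (a), this is a limit diagram in $\mathbf{Weil}$ if and only if, for every Weil algebra $W$, the evaluated diagram $F^{\mathcal{D}}(W)$ is a limit diagram in $\mathbf{Sets}$.

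Next I would rewrite the pointwise evaluation using (b): for each object $V$ appearing in $\mathcal{D}$, we have $F^{V}(W)=F(W\otimes V)$, and similarly on morphisms. Hence, as diagrams in $\mathbf{Sets}$,
\[
F^{\mathcal{D}}(W)=F(W\otimes\mathcal{D}).
\]
Therefore microlinearity of $F$ (namely $F^{\mathcal{D}}$ is a limit diagram in $\mathbf{Weil}$ for every finite limit diagram $\mathcal{D}$ in $\mathfrak{W}$) is equivalent to asserting that $F(W\otimes\mathcal{D})$ is a limit diagram in $\mathbf{Sets}$ for every Weil algebra $W$ and every finite limit diagram $\mathcal{D}$, which is precisely the stated condition.

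There is essentially no real obstacle: the proof is a bookkeeping argument combining the definition of $F^{W}$ with the pointwise nature of limits in a presheaf-style category. The only point requiring a line of care is noting that tensoring with a fixed $W$ turns the diagram $\mathcal{D}$ in $\mathfrak{W}$ into a new diagram $W\otimes\mathcal{D}$ which in general need not itself be a limit diagram; but this is irrelevant for the argument because the condition only demands that $F$ sends $W\otimes\mathcal{D}$ to a limit, not that $W\otimes\mathcal{D}$ itself be one. Hence the equivalence follows.
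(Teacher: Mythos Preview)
Your proof is correct and follows essentially the same approach as the paper, which simply cites Proposition 8.7 of Awodey (limits in functor categories are computed pointwise). You have merely unpacked that reference together with the definition $F^{W}=F(({\_})\otimes W)$, which is exactly the intended argument.
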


\begin{proof}
By Proposition 8.7 of .\cite{aw1}
\end{proof}

\section{\label{s4}Weil Categories}

\begin{definition}
A \underline{Weil category} is a couple $\left(  \mathcal{K},\mathbf{D}%
\right)  $, where

\begin{enumerate}
\item $\mathcal{K}$ is a topos.

\item $\mathbf{D}:\widetilde{\mathfrak{W}}^{\mathrm{op}}\rightarrow
\mathcal{K}$ is a product-preserving functor. In particular, we have%
\[
\mathbf{D}\left(  \mathbf{R}\right)  =1
\]
where $1$\ denotes the terminal object in $\mathcal{K}$.
\end{enumerate}
\end{definition}

\begin{remark}
The entity $\mathbf{D}$ is called a \underline{Dubuc functor} with enthroning
his pioneering work in \cite{du1}.
\end{remark}

Now some examples are in order.

\begin{example}
The first example of a Weil category has already been discussed in
\S \ref{s2}, namely,%
\begin{align*}
\mathcal{K}  &  =\mathbf{Weil}\\
\mathbf{D}  &  =\widetilde{\mathbf{y}}%
\end{align*}
Indeed, this is the paradigm of our new concept of a Weil category, just as
the category $\mathbf{Sets}$\ is the paradigm of the prevailing concept of a topos.
\end{example}

\begin{notation}
We denote by $C^{\infty}\mathbf{-Alg}$\ the category of $C^{\infty}$-algebras.
\end{notation}

\begin{example}
Let $\mathbf{L}$\ be a class of $C^{\infty}$-algebras encompassing all
$C^{\infty}$-algebras of the form $C^{\infty}\left(  \mathbf{R}^{n}\right)
\otimes W$ with $W$ being a Weil algebra (cf. Theorem III.5.3 of
\cite{koc1}.). We define a functor $i_{\widetilde{\mathfrak{W}},C^{\infty
}\mathbf{-Alg}}:\widetilde{\mathfrak{W}}\rightarrow C^{\infty}\mathbf{-Alg}$
as%
\[
i_{\widetilde{\mathfrak{W}},C^{\infty}\mathbf{-Alg}}\left(  \mathbf{R}\left[
X_{1},...,X_{n}\right]  \otimes W\right)  =C^{\infty}\left(  \mathbf{R}%
^{n}\right)  \otimes W
\]
Putting down $\mathbf{L}$\ as a full subcategory of the category $C^{\infty
}\mathbf{-Alg}$, consider a subcanonical Grothendieck topology $J$ on the
category $\mathbf{L}^{\mathrm{op}}$. We let $\mathcal{K}$\ be the category of
all sheaves on the site $\left(  \mathbf{L}^{\mathrm{op}},J\right)  $. The
Dubuc functor $\mathbf{D}$\ is defined as%
\[
\mathbf{D}=\mathbf{y}\circ i_{\widetilde{\mathfrak{W}},C^{\infty}%
\mathbf{-Alg}}%
\]
where $\mathbf{y}$\ stands for the Yoneda embedding.
\end{example}

\begin{remark}
Such examples have been discussed amply in the context of well-adapted models
of synthetic differential geometry without being conscious of Weil categories
at all. The reader is referred to \cite{koc1} and \cite{mo1} for them.
\end{remark}

Now we fix a Weil category $\left(  \mathcal{K},\mathbf{D}\right)
$\ throughout the rest of this section. Weil functors are to be defined within
our framework of a Weil category.

\begin{definition}
The bifunctor $\mathbf{T}:\mathfrak{W}\times\mathcal{K}\rightarrow\mathcal{K}$
is defined to be%
\[
\mathbf{T}\left(  \left(  \_\right)  ,\left(  \cdot\right)  \right)
\cong\left(  \cdot\right)  ^{\mathbf{D}\left(  \_\right)  }%
\]

\end{definition}

We give some elementary properties with respect to $\mathbf{T}$.

\begin{proposition}
We have the following:

\begin{itemize}
\item The functor $\mathbf{T}\left(  \mathbf{R},\left(  \_\right)  \right)
$\ and the identity functor of $\mathcal{K}$, both of which are $\mathcal{K}%
\rightarrow\mathcal{K}$,\ are naturally isomorphic, namely,
\[
\mathbf{T}\left(  \mathbf{R},\left(  \_\right)  \right)  \cong\left(
\_\right)
\]

\item The trifunctors $\mathbf{T}\left(  \left(  \cdot_{2}\right)
,\mathbf{T}\left(  \left(  \cdot_{1}\right)  ,\left(  \_\right)  \right)
\right)  $ and $\mathbf{T}\left(  \left(  \cdot_{1}\right)  \otimes\left(
\cdot_{2}\right)  ,\left(  \_\right)  \right)  $, both of which are
$\mathfrak{W}\times\mathfrak{W}\times\mathcal{K}\rightarrow\mathcal{K}$, are
naturally isomorphic, namely,%
\[
\mathbf{T}\left(  \left(  \cdot_{2}\right)  ,\mathbf{T}\left(  \left(
\cdot_{1}\right)  ,\left(  \_\right)  \right)  \right)  \cong\mathbf{T}\left(
\left(  \cdot_{1}\right)  \otimes\left(  \cdot_{2}\right)  ,\left(  \_\right)
\right)
\]
for any Weil space $F$\ and any Weil algebras $W_{1}$ and $W_{2}$.
\end{itemize}
\end{proposition}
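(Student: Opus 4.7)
Both clauses reduce to a single mechanism: transport the cartesian‐closed structure of the topos $\mathcal{K}$ across the Dubuc functor $\mathbf{D}$, using that $\mathbf{D}$ preserves products and that $\otimes$ is the coproduct in $\widetilde{\mathfrak{W}}$ (hence the product in $\widetilde{\mathfrak{W}}^{\mathrm{op}}$).

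For the first bullet, I note that $\mathbf{R}$ is the initial object of $\widetilde{\mathfrak{W}}$, so it is terminal in $\widetilde{\mathfrak{W}}^{\mathrm{op}}$. Product-preservation of $\mathbf{D}$ (read as preservation of the empty product) gives $\mathbf{D}(\mathbf{R})=1$, which is in any case built into the definition of a Weil category. In the cartesian closed category $\mathcal{K}$ the canonical map $X^{1}\to X$ (evaluation at the unique global element of $1$) is a natural isomorphism, whence
\[
\mathbf{T}(\mathbf{R},X)=X^{\mathbf{D}(\mathbf{R})}=X^{1}\cong X
\]
naturally in $X$.

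For the second bullet, I chain three natural isomorphisms. First, the standard exponential law in the cartesian closed category $\mathcal{K}$ yields
\[
\mathbf{T}\bigl(W_{2},\mathbf{T}(W_{1},X)\bigr)=\bigl(X^{\mathbf{D}(W_{1})}\bigr)^{\mathbf{D}(W_{2})}\cong X^{\mathbf{D}(W_{1})\times\mathbf{D}(W_{2})},
\]
naturally in $X,W_{1},W_{2}$. Second, since $W_{1}\otimes W_{2}$ is the coproduct of $W_{1}$ and $W_{2}$ in $\widetilde{\mathfrak{W}}$, it is their product in $\widetilde{\mathfrak{W}}^{\mathrm{op}}$; product-preservation of $\mathbf{D}$ then supplies a natural isomorphism $\mathbf{D}(W_{1})\times\mathbf{D}(W_{2})\cong\mathbf{D}(W_{1}\otimes W_{2})$. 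Third, functoriality of the exponential $X^{(\_)}$ transforms this into $X^{\mathbf{D}(W_{1})\times\mathbf{D}(W_{2})}\cong X^{\mathbf{D}(W_{1}\otimes W_{2})}=\mathbf{T}(W_{1}\otimes W_{2},X)$. Composing the three gives the desired natural isomorphism.

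There is no genuine obstacle here; the only point deserving care is bookkeeping naturality simultaneously in $X$, $W_{1}$ and $W_{2}$, and verifying that the coherence isomorphisms coming from the exponential law, from product-preservation of $\mathbf{D}$, and from functoriality of $(\_)^{\mathbf{D}(\_)}$ fit together without twist. Since each constituent isomorphism is canonical, their composite inherits naturality automatically, and the statement follows.
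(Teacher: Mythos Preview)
Your proof is correct. The paper itself offers no proof of this proposition: it is introduced with the phrase ``We give some elementary properties with respect to $\mathbf{T}$'' and then simply stated, so there is no argument in the paper to compare against. Your reasoning---using $\mathbf{D}(\mathbf{R})=1$ together with $X^{1}\cong X$ for the first clause, and the exponential law $(X^{A})^{B}\cong X^{A\times B}$ together with product-preservation of $\mathbf{D}$ (so that $\mathbf{D}(W_{1})\times\mathbf{D}(W_{2})\cong\mathbf{D}(W_{1}\otimes W_{2})$) for the second---is precisely the intended elementary verification, and it matches in spirit the corresponding explicit argument the paper gives earlier for the Weil-topos case (Proposition~\ref{p2.1} and Proposition~\ref{p2.2}).
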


\begin{proposition}
Given a Weil algebra $W$, the functor $\mathbf{T}\left(  W,\cdot\right)
:\mathcal{K}\rightarrow\mathcal{K}$ preserves limits.
\end{proposition}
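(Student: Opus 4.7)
The plan is to mimic the proof of the analogous corollary in the concrete Weil topos case and reduce the statement to the fact that right adjoints preserve limits. By the defining isomorphism $\mathbf{T}(W,\cdot) \cong (\cdot)^{\mathbf{D}(W)}$, it suffices to show that the exponential functor $(\cdot)^{\mathbf{D}(W)} : \mathcal{K} \to \mathcal{K}$ preserves limits.

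First I would observe that $\mathbf{D}(W)$ is a genuine object of $\mathcal{K}$, since $\mathbf{D}$ is a functor $\widetilde{\mathfrak{W}}^{\mathrm{op}} \to \mathcal{K}$ and $W \in \mathfrak{W} \subseteq \widetilde{\mathfrak{W}}$. Hence the expression $X^{\mathbf{D}(W)}$ makes sense for every $X \in \mathcal{K}$. Next, because $\mathcal{K}$ is assumed to be a topos (condition (1) in the definition of a Weil category), it is in particular cartesian closed, so for the fixed object $\mathbf{D}(W)$ we have the standard exponential adjunction
\[
\mathrm{Hom}_{\mathcal{K}}(Y \times \mathbf{D}(W), X) \cong \mathrm{Hom}_{\mathcal{K}}(Y, X^{\mathbf{D}(W)}).
\]
This exhibits $(\cdot)^{\mathbf{D}(W)}$ as a right adjoint to $(\cdot) \times \mathbf{D}(W)$.

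Finally, I would invoke the well-known general fact that any functor admitting a left adjoint preserves all limits that exist in its domain (see, e.g., Proposition 9.14 of \cite{aw1}). This immediately yields that $(\cdot)^{\mathbf{D}(W)}$, and hence $\mathbf{T}(W, \cdot)$, preserves limits.

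There is no real obstacle here: the proof is essentially a single invocation of the adjoint functor theorem applied to the exponential adjunction in a cartesian closed category. The only conceptual point worth flagging explicitly is that the argument does not require anything beyond the topos structure on $\mathcal{K}$ and the bare fact that $\mathbf{D}(W)$ lands in $\mathcal{K}$; the product-preservation hypothesis on $\mathbf{D}$ is not needed at this stage, though it will of course be essential elsewhere in the theory.
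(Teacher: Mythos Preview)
Your proposal is correct and takes essentially the same approach as the paper: identify $\mathbf{T}(W,\cdot)\cong(\cdot)^{\mathbf{D}W}$ as the right adjoint to $(\cdot)\times\mathbf{D}W$ via cartesian closedness of the topos $\mathcal{K}$, and then invoke Proposition 9.14 of \cite{aw1} that right adjoints preserve limits. The paper's proof is just a terser version of exactly this argument.
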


\begin{proof}
Since the functor $\mathbf{T}\left(  W,\cdot\right)  :\mathcal{K}%
\rightarrow\mathcal{K}$\ is of its left adjoint $\left(  \cdot\right)
\times\mathbf{D}W:\mathcal{K}\rightarrow\mathcal{K}$, the desired result
follows readily from the well known theorem claiming that a functor being of
its left adjoint preserves limits (cf. Proposition 9.14 of \cite{aw1}).
\end{proof}

\begin{proposition}
The trifunctors $\mathbf{T}\left(  \left(  \_\right)  ,\left(  \cdot
_{1}\right)  ^{\left(  \cdot_{2}\right)  }\right)  ,\mathbf{T}\left(  \left(
\_\right)  ,\left(  \cdot_{1}\right)  \right)  ^{\left(  \cdot_{2}\right)
}:\mathfrak{W}\times\mathcal{K}\times\mathcal{K}\rightarrow\mathcal{K}$ are
naturally isomorphic, namely,%
\[
\mathbf{T}\left(  \left(  \_\right)  ,\left(  \cdot_{1}\right)  ^{\left(
\cdot_{2}\right)  }\right)  \cong\mathbf{T}\left(  \left(  \_\right)  ,\left(
\cdot_{1}\right)  \right)  ^{\left(  \cdot_{2}\right)  }%
\]

\end{proposition}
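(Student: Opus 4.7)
The plan is to reduce the claim to the standard exponent-swap isomorphism available in any cartesian closed category, and then check naturality in all three arguments.

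First I would unfold the definition of $\mathbf{T}$. Writing $W$ for a Weil algebra and $A,B$ for objects of $\mathcal{K}$, the two trifunctors evaluate to
\[
\mathbf{T}\bigl(W,A^{B}\bigr)=(A^{B})^{\mathbf{D}(W)},\qquad \mathbf{T}(W,A)^{B}=(A^{\mathbf{D}(W)})^{B}.
\]
Since $\mathcal{K}$ is a topos it is cartesian closed, so for every triple of objects $X,Y,Z$ of $\mathcal{K}$ we have the canonical natural isomorphism
\[
(X^{Y})^{Z}\;\cong\;X^{Y\times Z}\;\cong\;X^{Z\times Y}\;\cong\;(X^{Z})^{Y},
\]
the middle step coming from symmetry of the product. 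Specializing with $X=(\cdot_{1})$, $Y=(\cdot_{2})$, $Z=\mathbf{D}(\_)$ produces the desired pointwise isomorphism.

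The remaining task is to verify that this isomorphism is natural in each of the three arguments. Naturality in $\cdot_{1}$ and $\cdot_{2}$ is the standard naturality of the CCC exponent-swap and follows from the naturality of the adjunction counit/unit for $(-)\times Y\dashv (-)^{Y}$. Naturality in the Weil-algebra variable $\_$ reduces to naturality of the bifunctor $(-)^{\mathbf{D}(\_)}$: given a morphism $W\to W'$ in $\mathfrak{W}$, the induced map $\mathbf{D}(W')\to\mathbf{D}(W)$ in $\mathcal{K}$ produces a commutative square relating the two exponent-swap isomorphisms, and this commutes because the isomorphism $(X^{Y})^{Z}\cong(X^{Z})^{Y}$ is natural in $Z$ as well.

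There is no real obstacle here; the only mild subtlety is that one must keep track of the direction of variance of $\mathbf{D}$ (contravariant on $\widetilde{\mathfrak{W}}$, hence producing morphisms in $\mathcal{K}$ going the opposite way from morphisms in $\mathfrak{W}$), but the naturality square still closes. In effect the proposition is nothing more than the observation that the Dubuc functor lets us transport the CCC exponent-swap identity from $\mathcal{K}$ to a statement involving $\mathbf{T}$.
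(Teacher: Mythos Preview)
Your proposal is correct and follows essentially the same approach as the paper: unfold $\mathbf{T}$, then invoke the exponent-swap isomorphism $(X^{Y})^{Z}\cong(X^{Z})^{Y}$ available in any cartesian closed category. The paper's own proof is even terser than yours, recording only the chain $(\cdot_{1}^{\cdot_{2}})^{\mathbf{D}(\_)}\cong(\cdot_{1}^{\mathbf{D}(\_)})^{\cdot_{2}}$ and leaving the naturality checks implicit, so your added discussion of naturality in each variable is extra care rather than a different route.
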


\begin{proof}
We have%
\begin{align*}
&  \mathbf{T}\left(  \left(  \_\right)  ,\left(  \cdot_{1}\right)  ^{\left(
\cdot_{2}\right)  }\right) \\
&  =\left(  \left(  \cdot_{1}\right)  ^{\left(  \cdot_{2}\right)  }\right)
^{\mathbf{D}\left(  \_\right)  }\\
&  \cong\left(  \left(  \cdot_{1}\right)  ^{\mathbf{D}\left(  \_\right)
}\right)  ^{\left(  \cdot_{2}\right)  }\\
&  =\mathbf{T}\left(  \left(  \_\right)  ,\left(  \cdot_{1}\right)  \right)
^{\left(  \cdot_{2}\right)  }%
\end{align*}

\end{proof}

An $\mathbf{R}$-algebra\ object is to be introduced within our framework of a
Weil category.

\begin{notation}
The entity $\mathbf{D}\left(  \mathbf{R}\left[  X\right]  \right)  $ is
denoted by $\mathbb{R}$.
\end{notation}

It is in nearly every mathematician's palm to see that

\begin{proposition}
\label{p4.1}The entity $\mathbb{R}$ is a commutative $\mathbf{R}$-algebra
object in $\mathcal{K}$ with respect to the following addition,
multiplication, scalar multiplication by $\alpha\in\mathbf{R}$ and unity:%
\begin{align*}
\mathbf{D}\left(  X+Y\leftarrow X\right)   &  :\mathbb{R}\times\mathbb{R}%
=\mathbf{D}\left(  \mathbf{R}\left[  X,Y\right]  \right)  \rightarrow
\mathbf{D}\left(  \mathbf{R}\left[  X\right]  \right)  =\mathbb{R}\\
\mathbf{D}\left(  XY\leftarrow X\right)   &  :\mathbb{R}\times\mathbb{R}%
=\mathbf{D}\left(  \mathbf{R}\left[  X,Y\right]  \right)  \rightarrow
\mathbf{D}\left(  \mathbf{R}\left[  X\right]  \right)  =\mathbb{R}\\
\mathbf{D}\left(  \alpha X\leftarrow X\right)   &  :\mathbb{R}=\mathbf{D}%
\left(  \mathbf{R}\left[  X\right]  \right)  \rightarrow\mathbf{D}\left(
\mathbf{R}\left[  X\right]  \right)  =\mathbb{R}\\
\mathbf{D}\left(  1\leftarrow X\right)   &  :1=\mathbf{D}\left(
\mathbf{R}\right)  \rightarrow\mathbf{D}\left(  \mathbf{R}\left[  X\right]
\right)  =\mathbb{R}%
\end{align*}

\end{proposition}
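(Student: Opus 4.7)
The strategy I would follow is to recognise that $\mathbf{R}[X]$ is the free commutative $\mathbf{R}$-algebra on one generator and so represents the forgetful functor from commutative $\mathbf{R}$-algebras to sets. Standard algebra therefore endows $\mathbf{R}[X]$ with a canonical \emph{co-commutative-$\mathbf{R}$-algebra} structure in $\widetilde{\mathfrak{W}}$, equivalently a commutative $\mathbf{R}$-algebra structure on the object $\mathbf{R}[X]$ viewed inside $\widetilde{\mathfrak{W}}^{\mathrm{op}}$, whose co-operations are exactly the four $\mathbf{R}$-algebra homomorphisms whose $\mathbf{D}$-images are displayed in the statement. Applying the product-preserving functor $\mathbf{D}:\widetilde{\mathfrak{W}}^{\mathrm{op}}\rightarrow\mathcal{K}$ then transports this structure to $\mathbb{R}=\mathbf{D}(\mathbf{R}[X])$ in $\mathcal{K}$.

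The first concrete step is to note that, since the tensor product is the coproduct in $\widetilde{\mathfrak{W}}$ (hence the product in $\widetilde{\mathfrak{W}}^{\mathrm{op}}$) and since $\mathbf{R}[X_{1},\dots,X_{n}]\cong\mathbf{R}[X_{1}]\otimes\cdots\otimes\mathbf{R}[X_{n}]$, the product-preservation assumption yields
\[
\mathbf{D}(\mathbf{R}[X_{1},\dots,X_{n}])\cong\mathbb{R}\times\cdots\times\mathbb{R}=\mathbb{R}^{n},
\]
together with $\mathbf{D}(\mathbf{R})=1$. This ensures that each of the four morphisms $+,\cdot,\alpha\cdot(\_),1$ is well defined in $\mathcal{K}$ with the declared source and target.

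The second step is to verify the axioms of a commutative $\mathbf{R}$-algebra object as commuting diagrams in $\mathcal{K}$. Each such axiom translates, under the contravariant identification with $\widetilde{\mathfrak{W}}^{\mathrm{op}}$, into a dual equation between $\mathbf{R}$-algebra homomorphisms out of some $\mathbf{R}[X_{1},\dots,X_{n}]$, and each such dual equation is a trivial polynomial identity. For instance, commutativity of $+$ reduces to the equality of the two homomorphisms $\mathbf{R}[X]\rightarrow\mathbf{R}[X,Y]$ sending $X\mapsto X+Y$ and $X\mapsto Y+X$; associativity of $+$ is the identity $(X+Y)+Z=X+(Y+Z)$ in $\mathbf{R}[X,Y,Z]$; distributivity is $X(Y+Z)=XY+XZ$; the unit law is $1\cdot X=X$; and compatibility of scalar multiplication with addition is $\alpha(X+Y)=\alpha X+\alpha Y$. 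Functoriality of $\mathbf{D}$ then turns each such algebra identity into the required commuting diagram.

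No real obstacle arises: the proposition is essentially the Lawvere-style fact that a product-preserving functor out of the opposite of an algebraic theory carries models of the theory to models of the theory. The only point worth flagging is that the statement lists only $+,\cdot,\alpha\cdot(\_),1$ and does not mention a zero element or additive inverses; the latter is supplied by scalar multiplication with $\alpha=-1$, and the former by $\mathbf{D}(0\leftarrow X):1\rightarrow\mathbb{R}$ (equivalently, scalar multiplication with $\alpha=0$), and both satisfy the required axioms by exactly the same argument.
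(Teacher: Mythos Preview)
Your argument is correct. The paper does not actually supply a proof of this proposition; it merely prefaces the statement with ``It is in nearly every mathematician's palm to see that'' and moves on. Your Lawvere-style argument---that $\mathbf{R}[X]$ carries a co-commutative-$\mathbf{R}$-algebra structure in $\widetilde{\mathfrak{W}}$ given by the displayed polynomial maps, and that the product-preserving functor $\mathbf{D}:\widetilde{\mathfrak{W}}^{\mathrm{op}}\to\mathcal{K}$ transports this to a commutative $\mathbf{R}$-algebra structure on $\mathbb{R}$---is exactly the routine verification the author has in mind, and your remark on recovering the zero and additive inverse from the scalar multiplications by $0$ and $-1$ is a helpful clarification.
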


\begin{notation}
The above four morphisms are denoted by%
\begin{align*}
+_{\mathbb{R}}  &  :\mathbb{R}\times\mathbb{R}\rightarrow\mathbb{R}\\
\cdot_{\mathbb{R}}  &  :\mathbb{R}\times\mathbb{R}\rightarrow\mathbb{R}\\
\alpha\cdot &  :\mathbb{R}\rightarrow\mathbb{R}\\
1_{\mathbb{R}}  &  :1\rightarrow\mathbb{R}%
\end{align*}
in order.
\end{notation}

\begin{notation}
The entity $\mathbf{D}\left(  \mathbf{R}\left[  X\right]  /\left(
X^{2}\right)  \right)  $ is denoted by $D$.
\end{notation}

\begin{proposition}
The $\mathbf{R}$-algebra object $\mathbb{R}$\ operates canonically on $D$ in
$\mathcal{K}$. To be specific, we have the following morphism:%
\begin{align*}
\mathbf{D}\left(  ZX/\left(  X^{2}\right)  \leftarrow X/\left(  X^{2}\right)
\right)   &  :\mathbb{R}\times D=\mathbf{D}\left(  \mathbf{R}\left[  Z\right]
\right)  \times\mathbf{D}\left(  \mathbf{R}\left[  X\right]  /\left(
X^{2}\right)  \right)  =\\
\mathbf{D}\left(  \mathbf{R}\left[  X,Z\right]  /\left(  X^{2}\right)
\right)   &  \rightarrow\mathbf{D}\left(  \mathbf{R}\left[  X\right]  /\left(
X^{2}\right)  \right)  =D
\end{align*}

\end{proposition}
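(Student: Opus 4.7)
The plan is to recognize the alleged scalar multiplication as the image under the Dubuc functor $\mathbf{D}$ of a single algebra homomorphism in $\widetilde{\mathfrak{W}}$, and then to deduce the axioms for an $\mathbb{R}$-action on $D$ from functoriality and product-preservation of $\mathbf{D}$, so that everything reduces to elementary verifications at the level of cahiers algebras.

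First I would check that the assignment $X \bmod (X^{2}) \mapsto ZX \bmod (X^{2})$ extends to a well-defined $\mathbf{R}$-algebra map $\varphi : \mathbf{R}[X]/(X^{2}) \to \mathbf{R}[X,Z]/(X^{2})$; indeed $(ZX)^{2} = Z^{2}X^{2} \equiv 0$ in the target, and both source and target lie in $\widetilde{\mathfrak{W}}$. Because $\mathbf{D}$ is product-preserving on $\widetilde{\mathfrak{W}}^{\mathrm{op}}$ (so carries tensor products in $\widetilde{\mathfrak{W}}$ to products in $\mathcal{K}$) and $\mathbf{R}[Z] \otimes \mathbf{R}[X]/(X^{2}) \cong \mathbf{R}[X,Z]/(X^{2})$ in $\widetilde{\mathfrak{W}}$, we obtain a canonical identification $\mathbb{R} \times D \cong \mathbf{D}(\mathbf{R}[X,Z]/(X^{2}))$ in $\mathcal{K}$, under which the displayed morphism is simply $\mathbf{D}(\varphi) : \mathbb{R} \times D \to D$.

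The action axioms then each collapse to the equality of two $\widetilde{\mathfrak{W}}$-morphisms, which are manifest by inspection. For associativity, the two routes $\mathbf{R}[X]/(X^{2}) \to \mathbf{R}[X,Z_{1},Z_{2}]/(X^{2})$ corresponding to $(rs)\cdot d$ and to $r\cdot(s\cdot d)$ both send $X$ to $Z_{1}Z_{2}X$: the former because multiplication on $\mathbb{R}$ is $\mathbf{D}$ of the map $\mathbf{R}[Z] \to \mathbf{R}[Z_{1},Z_{2}]$ sending $Z \mapsto Z_{1}Z_{2}$, the latter by composing $X \mapsto Z_{2}X$ with $X \mapsto Z_{1}X$. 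For unitality, the algebra map $\mathbf{R}[X]/(X^{2}) \to \mathbf{R}[X]/(X^{2})$ obtained from $1 \cdot d$ sends $X$ to $1 \cdot X = X$, hence is the identity, and so is its image under $\mathbf{D}$. Distributivity over the addition on $\mathbb{R}$ is handled in the same spirit, reducing to the evident $\mathbf{R}$-bilinearity of multiplication in $\mathbf{R}[X,Z_{1},Z_{2}]/(X^{2})$.

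The only mild obstacle is keeping clean track of the canonical product-to-tensor identifications when cascading two or more factors of $\mathbb{R}$ in the associativity and distributivity diagrams; but since $\mathbf{D}$ preserves products by hypothesis, all the required coherence isomorphisms are supplied for free, and no genuine difficulty arises.
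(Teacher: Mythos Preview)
The paper does not supply an explicit proof for this proposition; it simply records the morphism and then, in the immediately following proposition, lists the three commutative diagrams encoding the action axioms, again without proof. Your write-up fills in precisely the details the paper leaves implicit: you verify that $X \mapsto ZX$ gives a well-defined $\mathbf{R}$-algebra homomorphism in $\widetilde{\mathfrak{W}}$, invoke product-preservation of $\mathbf{D}$ to identify $\mathbb{R}\times D$ with $\mathbf{D}(\mathbf{R}[X,Z]/(X^{2}))$, and then reduce each action axiom to an equality of morphisms in $\widetilde{\mathfrak{W}}$ checked by tracking where $X$ goes. This is the natural and essentially only approach, and it matches the spirit of the paper's treatment (compare how the paper handles the commutative-algebra structure on $\mathbb{R}$ in Proposition~\ref{p4.1} and the module structure in Theorem~\ref{t5.1}). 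Your proof is correct and in line with the paper.
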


\begin{notation}
The above morphism is denoted by $\cdot_{\mathbb{R},D}$.
\end{notation}

\begin{proposition}
It makes the following diagrams commutative:

\begin{enumerate}
\item
\[%
\begin{tabular}
[c]{lll}%
$\mathbb{R}\times\mathbb{R}\times D$ & $\longrightarrow$ & $\mathbb{R}\times
D$\\
& $\searrow$ & $\downarrow$\\
&  & $D$%
\end{tabular}
\ \
\]
:where the horizontal arrow is $+_{\mathbb{R}}\times D:\mathbb{R}%
\times\mathbb{R}\times D\rightarrow\mathbb{R}\times D$, the vertical arrow is
$\cdot_{\mathbb{R},D}:\mathbb{R}\times D\rightarrow D$, and the slant arrow is%
\begin{align*}
\mathbf{D}\left(  Z_{1}X+Z_{2}X/\left(  X^{2}\right)  \leftarrow X/\left(
X^{2}\right)  \right)   &  :\mathbb{R}\times\mathbb{R}\times D=\\
\mathbf{D}\left(  \mathbf{R}\left[  Z_{1}\right]  \right)  \times
\mathbf{D}\left(  \mathbf{R}\left[  Z_{2}\right]  \right)  \times
\mathbf{D}\left(  \mathbf{R}\left[  X\right]  /\left(  X^{2}\right)  \right)
&  =\mathbf{D}\left(  \mathbf{R}\left[  Z_{1},Z_{2},X\right]  /\left(
X^{2}\right)  \right)  \rightarrow\mathbf{D}\left(  \mathbf{R}\left[
X\right]  /\left(  X^{2}\right)  \right)  =D
\end{align*}

\item
\[%
\begin{tabular}
[c]{lll}%
$\mathbb{R}\times\mathbb{R}\times D$ & $\longrightarrow$ & $\mathbb{R}\times
D$\\
$\downarrow$ &  & $\downarrow$\\
$\mathbb{R}\times D$ & $\longrightarrow$ & $D$%
\end{tabular}
\]
where the upper horizontal arrow is $\cdot_{\mathbb{R}}\times D:\mathbb{R}%
\times\mathbb{R}\times D\rightarrow\mathbb{R}\times D$, the lower horizontal
arrow is $\cdot_{\mathbb{R},D}:$,$\mathbb{R}\times D\rightarrow D$ the left
vertical arrow is $\mathbb{R}\times\cdot_{\mathbb{R},D}:\mathbb{R}%
\times\mathbb{R}\times D\rightarrow\mathbb{R}\times D$, and the right vertical
arrow is $\cdot_{\mathbb{R},D}:\mathbb{R}\times D\rightarrow D$.

\item
\[%
\begin{tabular}
[c]{lll}%
$\mathbb{R}\times D$ & $\longrightarrow$ & $D$\\
$\uparrow$ & $\nearrow$ & \\
$1\times D=D$ &  &
\end{tabular}
\]
where the horizontal arrow is $\cdot_{\mathbb{R},D}:\mathbb{R}\times
D\rightarrow D$, the vertical arrow is $1_{\mathbb{R}}\times D:1\times
D\rightarrow\mathbb{R}\times D$, and the slant arrow is $\mathrm{id}%
_{D}:D\rightarrow D$.
\end{enumerate}
\end{proposition}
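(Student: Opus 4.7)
The plan is to reduce each of the three commutativity conditions in $\mathcal{K}$ to the corresponding identity of $\mathbf{R}$-algebra homomorphisms in $\widetilde{\mathfrak{W}}$, exploiting the fact that $\mathbf{D}:\widetilde{\mathfrak{W}}^{\mathrm{op}}\rightarrow\mathcal{K}$ is product-preserving and therefore carries coproducts (tensor products) of cahiers algebras to products in $\mathcal{K}$.

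First I would observe that every object appearing in the three diagrams is of the form $\mathbf{D}(A)$ for some cahiers algebra $A$: concretely, $\mathbb{R}\times\mathbb{R}\times D\cong\mathbf{D}(\mathbf{R}[Z_{1},Z_{2},X]/(X^{2}))$, $\mathbb{R}\times D\cong\mathbf{D}(\mathbf{R}[Z,X]/(X^{2}))$, and $1\times D\cong D=\mathbf{D}(\mathbf{R}[X]/(X^{2}))$. Similarly each structural morphism in the diagrams is by definition $\mathbf{D}(\varphi)$ for some algebra homomorphism $\varphi$ going the other way. By functoriality of $\mathbf{D}$, it therefore suffices in each case to verify that the two routes through the dual diagram of algebra homomorphisms agree on the single generator $X$ (modulo $(X^{2})$).

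For the first (distributivity) diagram, the horizontal-then-vertical route dualises to
\[
\mathbf{R}[X]/(X^{2})\;\xrightarrow{\;X\mapsto ZX\;}\;\mathbf{R}[Z,X]/(X^{2})\;\xrightarrow{\;Z\mapsto Z_{1}+Z_{2}\;}\;\mathbf{R}[Z_{1},Z_{2},X]/(X^{2}),
\]
which sends $X\mapsto(Z_{1}+Z_{2})X$, while the slant route sends $X\mapsto Z_{1}X+Z_{2}X$; these are equal in $\mathbf{R}[Z_{1},Z_{2},X]/(X^{2})$. For the second (associativity) diagram, the upper route dualises to $X\mapsto ZX\mapsto (Z_{1}Z_{2})X$ and the lower route to $X\mapsto Z_{2}X\mapsto Z_{1}Z_{2}X$, and these coincide (the square of $X$ needs never be considered since only linear terms in $X$ ever appear). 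For the third (unit) diagram, the composite $X\mapsto ZX\mapsto 1\cdot X=X$ is exactly the identity homomorphism on $\mathbf{R}[X]/(X^{2})$, matching the slant $\mathrm{id}_{D}$.

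There is no genuine obstacle here: the proposition is a pure bookkeeping exercise, because the axioms for $\mathbb{R}$ as a ring object and for its action on $D$ have been encoded into the polynomial identities $(Z_{1}+Z_{2})X=Z_{1}X+Z_{2}X$, $(Z_{1}Z_{2})X=Z_{1}(Z_{2}X)$, and $1\cdot X=X$ in the appropriate cahiers algebras. The only delicate point is correctly identifying each product in $\mathcal{K}$ with the image under $\mathbf{D}$ of the relevant tensor product of cahiers algebras, which is exactly where the product-preservation clause of the definition of a Weil category is used.
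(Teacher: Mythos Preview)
Your argument is correct: functoriality of $\mathbf{D}$ together with its product-preservation reduces each diagram in $\mathcal{K}$ to an equality of $\mathbf{R}$-algebra homomorphisms between cahiers algebras, and checking those on the generator $X$ is exactly the right thing to do. The paper does not supply a proof of this proposition at all --- it is stated and immediately followed by a remark, the verification being treated as self-evident in the same spirit as the preceding ``in nearly every mathematician's palm'' proposition --- so your write-up is in fact more detailed than what the paper offers, while following the only reasonable line of argument.
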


\begin{remark}
We have no canonical addition in $D$. In other words, we could not define
addition in $D$\ in such a way as%
\begin{align*}
\mathbf{D}\left(  \left(  X+Y\right)  /\left(  X^{2},Y^{2}\right)  \leftarrow
X/\left(  X^{2}\right)  \right)   &  :D\times D=\mathbf{R}\left[  X\right]
/\left(  X^{2}\right)  \times\mathbf{R}\left[  Y\right]  /\left(
Y^{2}\right)  =\\
\mathbf{R}\left[  X,Y\right]  /\left(  X^{2},Y^{2}\right)   &  \rightarrow
\mathbf{R}\left[  X\right]  /\left(  X^{2}\right)  =D
\end{align*}
This would simply be meaningless, because%
\[
\left(  X+Y\right)  /\left(  X^{2},Y^{2}\right)  \leftarrow X/\left(
X^{2}\right)
\]
is not well-defined.
\end{remark}

\begin{remark}
We have the canonical morphism $D\rightarrow\mathbb{R}$. Specifically
speaking, it is to be%
\[
\mathbf{D}\left(  X/\left(  X^{2}\right)  \leftarrow Z\right)  :D=\mathbf{D}%
\left(  \mathbf{R}\left[  X\right]  /\left(  X^{2}\right)  \right)
\rightarrow\mathbf{D}\left(  \mathbf{R}\left[  Z\right]  \right)  =\mathbb{R}%
\]

\end{remark}

Many significant concepts and theorems of topos theory can quite easily be
transferred into the theory of Weil categories surely with due modifications.
In particular, we have

\begin{theorem}
\label{t4.1}(\underline{The Fundamental Theorem for Weil Categories}, cf.
Theorem 4.19 in \cite{bel1} and Theorem 1 in \S IV.7 of \cite{ma1}) Let
$\left(  \mathcal{K},\mathbf{D}\right)  $\ be a Weil category with
$M\in\mathcal{K}$. Then the slice category $\mathcal{K}/M$\ endowed with a
Dubuc functor $\mathbf{D}_{M}:\rightarrow\mathcal{K}/M$ is a Weil category, where

\begin{itemize}
\item $\mathbf{D}_{M}\left(  A\right)  $ is the canonical projection
$\mathbf{D}\left(  A\right)  \times M\rightarrow M$ for any $A\in
\widetilde{\mathfrak{W}}$, and

\item $\mathbf{D}_{M}\left(  f\right)  $ is $f\times M$ for any morphism
$f$\ in $\widetilde{\mathfrak{W}}$.
\end{itemize}
\end{theorem}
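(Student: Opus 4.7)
The plan is to decompose the statement into two essentially independent pieces, corresponding to clauses (1) and (2) of the definition of a Weil category, and to verify each in turn. For clause (1), that $\mathcal{K}/M$ is a topos whenever $\mathcal{K}$ is one, nothing is demanded beyond invoking the classical \emph{fundamental theorem of topos theory} already cited in the statement (Theorem 4.19 of \cite{bel1}, or Theorem~1 of \S IV.7 of \cite{ma1}). So the real content of the theorem lies in checking that $\mathbf{D}_{M}$ is a well-defined, product-preserving functor from $\widetilde{\mathfrak{W}}^{\mathrm{op}}$ to $\mathcal{K}/M$.

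Functoriality of $\mathbf{D}_{M}$ I would establish directly: for each $A\in\widetilde{\mathfrak{W}}$, the morphism $\mathbf{D}(A)\times M\to M$ is an object of $\mathcal{K}/M$; for each $f:A\to B$ in $\widetilde{\mathfrak{W}}$, the morphism $\mathbf{D}(f)\times\mathrm{id}_{M}:\mathbf{D}(B)\times M\to\mathbf{D}(A)\times M$ commutes with the canonical projections to $M$ by construction, and functoriality in $f$ is then inherited from functoriality of $\mathbf{D}$ together with that of $(-)\times M$. For product preservation, I would handle the terminal object first: by definition $\mathbf{D}_{M}(\mathbf{R})=\mathbf{D}(\mathbf{R})\times M=1\times M\cong M$, and $M$ viewed as $\mathrm{id}_{M}:M\to M$ is by definition the terminal object of $\mathcal{K}/M$. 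For binary products, the product of two objects of $\widetilde{\mathfrak{W}}^{\mathrm{op}}$ is the tensor product in $\widetilde{\mathfrak{W}}$, so that product preservation of $\mathbf{D}$ yields
\[
\mathbf{D}(A\otimes B)\cong\mathbf{D}(A)\times\mathbf{D}(B).
\]
Meanwhile, the product of $\mathbf{D}_{M}(A)$ and $\mathbf{D}_{M}(B)$ in $\mathcal{K}/M$ is computed as the pullback in $\mathcal{K}$
\[
(\mathbf{D}(A)\times M)\times_{M}(\mathbf{D}(B)\times M),
\]
which by the universal property of products and pullbacks is canonically isomorphic to $\mathbf{D}(A)\times\mathbf{D}(B)\times M$ sitting over $M$ via the third projection. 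Chaining the two isomorphisms exhibits $\mathbf{D}_{M}(A\otimes B)$ as the product of $\mathbf{D}_{M}(A)$ and $\mathbf{D}_{M}(B)$ in $\mathcal{K}/M$.

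The main potential obstacle is purely bookkeeping: one must check that the two candidate projections $\mathbf{D}_{M}(A\otimes B)\to\mathbf{D}_{M}(A)$ and $\mathbf{D}_{M}(A\otimes B)\to\mathbf{D}_{M}(B)$ produced by $\mathbf{D}_{M}$ coincide, under the above isomorphisms, with the canonical pullback projections in $\mathcal{K}/M$. This is a routine diagram chase, reducing to the statement that the functor $(-)\times M:\mathcal{K}\to\mathcal{K}/M$ (with structure map the second projection) carries products in $\mathcal{K}$ to products in $\mathcal{K}/M$; once that is recorded, the theorem follows. No extra topos-theoretic input is needed beyond what was used to establish clause (1), so the proof is essentially a verification against the definition.
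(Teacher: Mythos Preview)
Your proposal is correct. In fact, the paper does not supply a proof of this theorem at all: it is stated with the references to the classical fundamental theorem of topos theory and followed immediately by a remark on fiberwise differential geometry, with no proof environment. Your argument is therefore strictly more detailed than the paper's treatment; the decomposition into (i) the slice $\mathcal{K}/M$ being a topos via the cited fundamental theorem and (ii) the routine verification that $\mathbf{D}_{M}=(-)\times M\circ\mathbf{D}$ is a product-preserving functor into $\mathcal{K}/M$ is exactly what the paper leaves implicit, and your handling of both pieces is sound.
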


\begin{remark}
This theorem corresponds to so-called \underline{fiberwise differential
geometry}. In other words, the theorem claims that we can do differential
geometry \underline{fiberwise}.
\end{remark}

\section{\label{s5}Axiomatic Differential Geometry}

We fix a Weil category $\left(  \mathcal{K},\mathbf{D}\right)  $\ throughout
this section.

\begin{notation}
We introduce the following aliases:

\begin{itemize}
\item The entity $\mathbf{D}\left(  \mathbf{R}\left[  X,Y\right]  /\left(
X^{2},Y^{2},XY\right)  \right)  $ is denoted by $D\left(  2\right)  $.

\item The entity $\mathbf{D}\left(  \mathbf{R}\left[  X,Y,Z\right]  /\left(
X^{2},Y^{2},Z^{2},XY,XZ,YZ\right)  \right)  $ is denoted by $D\left(
3\right)  $.
\end{itemize}
\end{notation}

As a corollary of Proposition \ref{p4.1} and Theorem \ref{t4.1}, we have

\begin{proposition}
The canonical projection $\mathbb{R}\times M\rightarrow M$\ is a commutative
$\mathbf{R}$-algebra object in the slice category $\mathcal{K}/M$.
\end{proposition}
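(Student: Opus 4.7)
The plan is to obtain this as a direct formal consequence of combining the Fundamental Theorem for Weil Categories (Theorem \ref{t4.1}) with Proposition \ref{p4.1}. The content of the statement is that the commutative $\mathbf{R}$-algebra structure on $\mathbb{R}$ in $\mathcal{K}$ lifts fiberwise to the slice over $M$; conceptually this is precisely what the slogan \emph{fiberwise differential geometry} (see the remark following Theorem \ref{t4.1}) is supposed to mean in the simplest case.

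First I would invoke Theorem \ref{t4.1} to replace $(\mathcal{K},\mathbf{D})$ with the Weil category $(\mathcal{K}/M,\mathbf{D}_M)$, where $\mathbf{D}_M(A)$ is the canonical projection $\mathbf{D}(A)\times M\rightarrow M$. Next I would observe that, unwinding the definition,
\[
\mathbf{D}_M(\mathbf{R}[X]) \;=\; \mathbf{D}(\mathbf{R}[X])\times M \;=\; \mathbb{R}\times M \longrightarrow M,
\]
so the object of the slice category asserted in the proposition is nothing but the $\mathbb{R}$ of the Weil category $(\mathcal{K}/M,\mathbf{D}_M)$ (temporarily write it $\mathbb{R}_M$).

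Then I would apply Proposition \ref{p4.1} to this new Weil category: since $\mathbf{D}_M$ is a Dubuc functor in its own right, the same four morphisms $X+Y\leftarrow X$, $XY\leftarrow X$, $\alpha X\leftarrow X$, $1\leftarrow X$ in $\widetilde{\mathfrak{W}}$, when transported by $\mathbf{D}_M$, equip $\mathbb{R}_M=\mathbb{R}\times M\to M$ with addition, multiplication, scalar multiplication and unit satisfying the commutative $\mathbf{R}$-algebra axioms in $\mathcal{K}/M$. Concretely, each operation is simply the product of the corresponding operation for $\mathbb{R}$ from Proposition \ref{p4.1} with $\mathrm{id}_M$, which is automatically a morphism over $M$.

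There is essentially no obstacle here, since both ingredients have already been proved and the argument is a one-line substitution. The only mild point worth checking is the compatibility of products used in Proposition \ref{p4.1} (such as $\mathbb{R}\times\mathbb{R}=\mathbf{D}(\mathbf{R}[X,Y])$) with the product in the slice category; but by the construction of $\mathbf{D}_M$ in Theorem \ref{t4.1} and the fact that products in $\mathcal{K}/M$ are fiber products over $M$, we have $\mathbf{D}_M(A)\times_M\mathbf{D}_M(B)=\mathbf{D}(A\otimes B)\times M=\mathbf{D}_M(A\otimes B)$, so all the diagrams of Proposition \ref{p4.1} transport verbatim. This completes the proof.
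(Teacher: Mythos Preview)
Your proof is correct and is precisely the argument the paper intends: the paper states this proposition explicitly \emph{as a corollary of Proposition \ref{p4.1} and Theorem \ref{t4.1}}, and you have simply (and correctly) unpacked what that corollary means, including the check that products in $\mathcal{K}/M$ are fiber products so that $\mathbf{D}_M$ remains product-preserving.
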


\begin{definition}
An object $M$\ in $\mathcal{K}$\ is called \underline{microlinear} provided
that a finite limit diagram $\mathcal{D}$\ in $\mathfrak{W}$\ always yields a
limit diagram $\mathbf{T}\left(  \mathcal{D},M\right)  $\ in $\mathcal{K}$.
\end{definition}

As in Proposition \ref{p3.1}, we have

\begin{proposition}
We have the following:

\begin{enumerate}
\item The limit of a diagram of microlinear objects in $\mathcal{K}$\ is microlinear.

\item Given objects $M$\ and $N$ in $\mathcal{K}$, if $M$\ is microlinear,
then the exponential $M^{N}$\ is also microlinear.
\end{enumerate}
\end{proposition}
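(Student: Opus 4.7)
The plan is to imitate the proof of Proposition \ref{p3.1} line by line, transferring its two ingredients from $\mathbf{Weil}$ to $\mathcal{K}$. The structural facts already established in this section do all the work: namely (a) that for every Weil algebra $W$ the functor $\mathbf{T}(W,\cdot):\mathcal{K}\to\mathcal{K}$ preserves limits, being right adjoint to $(\cdot)\times\mathbf{D}W$, and (b) the natural isomorphism $\mathbf{T}((\_),(\cdot_{1})^{(\cdot_{2})})\cong\mathbf{T}((\_),(\cdot_{1}))^{(\cdot_{2})}$. Together with the general fact that right adjoints preserve limits and that limits commute with limits, these are exactly what one needs.

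For item (1), I would fix a finite limit diagram $\mathcal{D}$ in $\mathfrak{W}$ and an $\mathcal{I}$-indexed diagram $\{M_{\alpha}\}$ of microlinear objects in $\mathcal{K}$ with limit $M=\lim_{\alpha}M_{\alpha}$. Because each $\mathbf{T}(W,\cdot)$ preserves limits, the diagram $\mathbf{T}(\mathcal{D},M)$ is node-wise naturally isomorphic to the $\mathcal{I}$-indexed limit of the diagrams $\mathbf{T}(\mathcal{D},M_{\alpha})$. By microlinearity of $M_{\alpha}$, each $\mathbf{T}(\mathcal{D},M_{\alpha})$ is itself a limit diagram in $\mathcal{K}$, and an $\mathcal{I}$-indexed limit of limit diagrams is again a limit diagram by commutativity of double limits. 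Hence $\mathbf{T}(\mathcal{D},M)$ is a limit diagram and $M$ is microlinear.

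For item (2), I would fix a microlinear $M$, an arbitrary $N$, and a finite limit diagram $\mathcal{D}$ in $\mathfrak{W}$. The isomorphism in (b), applied pointwise along $\mathcal{D}$, yields an isomorphism of diagrams $\mathbf{T}(\mathcal{D},M^{N})\cong\mathbf{T}(\mathcal{D},M)^{N}$ in $\mathcal{K}$. The right-hand diagram is a limit diagram: $\mathbf{T}(\mathcal{D},M)$ is one by microlinearity of $M$, and $(\_)^{N}$ preserves limits since it is right adjoint to $(\_)\times N$ in the topos $\mathcal{K}$. Therefore $\mathbf{T}(\mathcal{D},M^{N})$ is a limit diagram and $M^{N}$ is microlinear.

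I do not anticipate any substantive obstacle; both parts reduce to standard adjoint and limit-commutation arguments. The only point that merits a moment of care is to confirm that the invoked isomorphisms are natural enough in the diagram variable $\mathcal{D}$ to preserve the property of being a \emph{limit diagram} and not merely of having the correct apex object. This is immediate from the functoriality of $\mathbf{T}(\cdot,M)$, $\mathbf{T}(W,\cdot)$ and $(\_)^{N}$, so the proof is essentially a diagrammatic paraphrase of Proposition \ref{p3.1}.
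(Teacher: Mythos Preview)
Your proposal is correct and matches the paper's approach exactly: the paper gives no explicit proof here beyond the phrase ``As in Proposition \ref{p3.1}'', and you have faithfully transferred that argument to $\mathcal{K}$ using precisely the Section~\ref{s4} analogues (limit preservation of $\mathbf{T}(W,\cdot)$ and the isomorphism $\mathbf{T}((\_),(\cdot_{1})^{(\cdot_{2})})\cong\mathbf{T}((\_),(\cdot_{1}))^{(\cdot_{2})}$) of the tools invoked in the proof of Proposition~\ref{p3.1}.
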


\begin{theorem}
\label{t5.1}Let $M$\ be a microlinear object in $\mathcal{K}$. The entity
$M^{\mathbf{D}\left(  \mathbf{R}\rightarrow\mathbf{R}\left[  X\right]
/(X^{2})\right)  }:M^{D}=M^{\mathbf{D}\left(  \mathbf{R}\left[  X\right]
/(X^{2})\right)  }\rightarrow M^{\mathbf{D}\left(  \mathbf{R})\right)  }=M$ is
a $\left(  \mathbb{R}\times M\rightarrow M\right)  $-module object in the
slice category $\mathcal{K}/M$ with respect to the following addition and
scalar multiplication:

\begin{itemize}
\item The following diagram%
\[%
\begin{array}
[c]{ccc}%
\mathbf{R}\left[  X,Y\right]  /\left(  X^{2},Y^{2},XY\right)  & \rightarrow &
\mathbf{R}\left[  Y\right]  /\left(  Y^{2}\right) \\
\downarrow &  & \downarrow\\
\mathbf{R}\left[  X\right]  /\left(  X^{2}\right)  & \rightarrow & \mathbf{R}%
\end{array}
\]
is a pullback, where the upper horizontal arrow is%
\[
\left(  X,Y\right)  /\left(  X^{2},Y^{2},XY\right)  \rightarrow\left(
0,Y\right)  /\left(  Y^{2}\right)
\]
the lower horizontal arrow is%
\[
X/\left(  X^{2}\right)  \rightarrow0
\]
the left vertical arrow is%
\[
\left(  X,Y\right)  /\left(  X^{2},Y^{2},XY\right)  \rightarrow\left(
X,0\right)  /\left(  X^{2}\right)
\]
and the right vertical arrow is%
\[
Y/\left(  Y^{2}\right)  \rightarrow0
\]
Since $M$\ is microlinear, the diagram%
\[%
\begin{array}
[c]{ccc}%
M^{D\left(  2\right)  }=M^{\mathbf{D}\left(  \mathbf{R}\left[  X,Y\right]
/\left(  X^{2},Y^{2},XY\right)  \right)  } & \rightarrow & M^{\mathbf{D}%
\left(  \mathbf{R}\left[  Y\right]  /\left(  Y^{2}\right)  \right)  }=M^{D}\\
\downarrow &  & \downarrow\\
M^{D}=M^{\mathbf{D}\left(  \mathbf{R}\left[  X\right]  /\left(  X^{2}\right)
\right)  } & \rightarrow & M^{\mathbf{D}\left(  \mathbf{R}\right)  }=M
\end{array}
\]
is a pullback, where the upper horizontal arrow is%
\[
M^{\mathbf{D}\left(  \left(  X,Y\right)  /\left(  X^{2},Y^{2},XY\right)
\rightarrow\left(  0,Y\right)  /\left(  Y^{2}\right)  \right)  }%
\]
the lower horizontal arrow is%
\[
M^{\mathbf{D}\left(  X/\left(  X^{2}\right)  \rightarrow0\right)  }%
\]
the left vertical arrow is%
\[
M^{\mathbf{D}\left(  \left(  X,Y\right)  /\left(  X^{2},Y^{2},XY\right)
\rightarrow\left(  X,0\right)  /\left(  X^{2}\right)  \right)  }%
\]
and the right vertical arrow is%
\[
M^{\mathbf{D}\left(  Y/\left(  Y^{2}\right)  \rightarrow0\right)  }%
\]
Therefore we have%
\[
M^{D\left(  2\right)  }=M^{D}\times_{M}M^{D}%
\]
The morphism%
\begin{align*}
M^{\mathbf{D}\left(  \left(  X,Y\right)  /\left(  X^{2},Y^{2},XY\right)
\rightarrow\left(  X,X\right)  /\left(  X^{2}\right)  \right)  }  &
:M^{D}\times_{M}M^{D}=M^{D\left(  2\right)  }=M^{\mathbf{D}\left(
\mathbf{R}\left[  X,Y\right]  /\left(  X^{2},Y^{2},XY\right)  \right)  }\\
&  \rightarrow M^{\mathbf{D}\left(  \mathbf{R}\left[  X\right]  /\left(
X^{2}\right)  \right)  }=M^{D}%
\end{align*}
stands for addition and is denoted by $\varphi$.

\item The composition of the morphism%
\begin{align*}
\mathbf{D}\left(  XY/\left(  X^{2}\right)  \leftarrow X/\left(  X^{2}\right)
\right)  \times M^{D}  &  :\\
D\times\mathbb{R}\times M^{D}  &  =\mathbf{D}\left(  \mathbf{R}\left[
X\right]  /\left(  X^{2}\right)  \right)  \times\mathbf{D}\left(
\mathbf{R}\left[  Y\right]  \right)  \times M^{D}\\
&  \rightarrow\\
\mathbf{D}\left(  \mathbf{R}\left[  X\right]  /\left(  X^{2}\right)  \right)
\times M^{D}  &  =D\times M^{D}%
\end{align*}
and the evaluation morphism%
\[
D\times M^{D}\rightarrow M
\]
is denoted by $\widehat{\psi}_{1}:D\times\mathbb{R}\times M^{D}\rightarrow M$.
Its transpose $\psi_{1}:\mathbb{R}\times M^{D}\rightarrow M^{D}$\ stands for
scalar multiplication.
\end{itemize}
\end{theorem}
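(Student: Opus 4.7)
The plan is to verify each module-object axiom by reducing it to an equality of morphisms in $\widetilde{\mathfrak{W}}$. Microlinearity of $M$ lets us identify any finite limit in $\mathfrak{W}$ formed from the relevant cahiers algebras with the corresponding limit of exponentials $M^{\mathbf{D}(-)}$ in $\mathcal{K}$, while $\mathbf{D}$ sends coproducts in $\widetilde{\mathfrak{W}}$ (tensor products of polynomial generators) to products in $\mathcal{K}$. Consequently every commutation I must check will already commute at the level of Weil algebras, where it is just an elementary polynomial identity in the appropriate nilpotent quotient.

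First I would extend the pullback identification already established for $M^{D(2)}$. The same style of argument, applied to the obvious iterated pullback presenting $\mathbf{R}[X,Y,Z]/(X^2,Y^2,Z^2,XY,XZ,YZ)$ as a triple fibre product over $\mathbf{R}$, gives $M^{D(3)} = M^D \times_M M^D \times_M M^D$. With these identifications in hand, every morphism I must compare is obtained by applying the contravariant functor $M^{\mathbf{D}(-)}$ to an explicit algebra map, so every axiom becomes an assertion about algebra morphisms in $\widetilde{\mathfrak{W}}$.

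Next I would dispatch the abelian-group axioms for $(\varphi, 0_{M^D})$, where the zero section $M \to M^D$ is $M^{\mathbf{D}(\mathbf{R}[X]/(X^2) \to \mathbf{R})}$ with $X \mapsto 0$. Commutativity of $\varphi$ reflects the symmetry $X \leftrightarrow Y$ of $\mathbf{R}[X,Y]/(X^2,Y^2,XY)$; associativity comes from comparing the two factorizations of the algebra map $\mathbf{R}[X]/(X^2) \to \mathbf{R}[X,Y,Z]/(X^2,Y^2,Z^2,XY,XZ,YZ)$ sending $X \mapsto X+Y+Z$ (which is well-defined because $(X+Y+Z)^2 = 0$ in the target); the unit law reduces to $X \mapsto X+0$; and additive inverses are provided by $X \mapsto -X$. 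For the $\mathbb{R}$-action, the axiom $1 \cdot v = v$ follows from substituting $Y = 1$ in the map induced by $X \mapsto YX$; the compatibility $(\alpha\beta)\cdot v = \alpha \cdot (\beta \cdot v)$ reflects the trivial identity $(\alpha\beta)X = \alpha(\beta X)$; and the two distributive laws come from applying $M^{\mathbf{D}(-)}$ to the obvious algebra maps witnessing $Y(X_1 + X_2) = YX_1 + YX_2$ in $\mathbf{R}[Y,X_1,X_2]/(X_1^2,X_2^2,X_1X_2)$ and $(Y_1+Y_2)X = Y_1X + Y_2X$ in $\mathbf{R}[Y_1,Y_2,X]/(X^2)$.

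The main obstacle is not any single computation, since each axiom is a transparent polynomial identity, but rather the bookkeeping: one must consistently track which cahiers algebra carries which indeterminates, verify at each step that the finite limit diagram in $\mathfrak{W}$ being invoked really does have the claimed form so that microlinearity applies (this is where the restriction that all nilpotent generators in $D(2)$ and $D(3)$ have \emph{pairwise} vanishing products becomes critical), and confirm that each structure morphism is a morphism over $M$ in the slice $\mathcal{K}/M$, not merely in $\mathcal{K}$. Once this scaffolding is in place the proof is essentially mechanical.
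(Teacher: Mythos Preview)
Your treatment of the additive structure is correct and matches the paper's: associativity (and the remaining abelian-group axioms) reduce via microlinearity and functoriality of $M^{\mathbf{D}(-)}$ to transparent polynomial identities in $D(2)$ and $D(3)$, exactly as you describe, and the paper's argument for associativity proceeds in the same way.

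The gap is in your handling of scalar multiplication. You assert that ``every morphism I must compare is obtained by applying the contravariant functor $M^{\mathbf{D}(-)}$ to an explicit algebra map,'' but this is false for $\psi_1$: it is defined as the exponential \emph{transpose} of a composite ending in the evaluation $D \times M^D \to M$, and its domain $\mathbb{R} \times M^D$ is not of the form $M^{\mathbf{D}(A)}$ for any cahiers algebra $A$ (in particular it is not $M^{D \times \mathbb{R}} = (M^D)^{\mathbb{R}}$). Hence the distributive law cannot be read off from the identity $Y(X_1+X_2)=YX_1+YX_2$ merely by applying $M^{\mathbf{D}(-)}$; one must first pass through the exponential adjunction to convert statements about $\psi_1$ into comparisons of maps of the form $M^{\mathbf{D}(f)}$. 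The paper does precisely this: it introduces an auxiliary action $\psi_2:\mathbb{R}\times M^{D(2)}\to M^{D(2)}$, compares the \emph{untransposed} maps $\widehat{\psi}_1,\widehat{\psi}_2$ via a diagram whose commutativity is established partly by algebra identities and partly by a parametrized-adjunction square (relating the evaluations for $D$ and $D(2)$), and then transposes back via a second parametrized-adjunction diagram to obtain $\varphi\circ\psi_2=\psi_1\circ(\mathbb{R}\times\varphi)$. Your closing remark that the obstacle is ``bookkeeping'' is apt, but the essential bookkeeping is this adjunction chase, which your outline omits entirely.
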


\begin{proof}
Here we deal only with the associativity of addition and the distibutivity of
scalar multiplication over addition, leaving verification of the other
rquisites of $M^{\mathbf{D}\left(  \mathbf{R}\rightarrow\mathbf{R}\left[
X\right]  /(X^{2})\right)  }:M^{D}=M^{\mathbf{D}\left(  \mathbf{R}\left[
X\right]  /(X^{2})\right)  }\rightarrow M^{\mathbf{D}\left(  \mathbf{R}%
)\right)  }=M$\ being a $\left(  \mathbb{R}\times M\rightarrow M\right)
$-module object in the category $\mathcal{K}/M$\ to the reader.

\begin{itemize}
\item The diagram%
\[%
\begin{array}
[c]{ccccc}
&  & \mathbf{R}\left[  X,Y,Z\right]  /\left(  X^{2},Y^{2},Z^{2}%
,XY,XZ,YZ\right)  &  & \\
& \swarrow & \downarrow & \searrow & \\
\mathbf{R}\left[  X\right]  /\left(  X^{2}\right)  &  & \mathbf{R}\left[
X\right]  /\left(  X^{2}\right)  &  & \mathbf{R}\left[  X\right]  /\left(
X^{2}\right) \\
& \searrow & \downarrow & \swarrow & \\
&  & \mathbf{R} &  &
\end{array}
\]
is a limit diagram, where the upper three arrows are%
\begin{align*}
\left(  X,Y,Z\right)  /\left(  X^{2},Y^{2},Z^{2},XY,XZ,YZ\right)   &
\rightarrow\left(  X,0,0\right)  /\left(  X^{2}\right) \\
\left(  X,Y,Z\right)  /\left(  X^{2},Y^{2},Z^{2},XY,XZ,YZ\right)   &
\rightarrow\left(  0,X,0\right)  /\left(  X^{2}\right) \\
\left(  X,Y,Z\right)  /\left(  X^{2},Y^{2},Z^{2},XY,XZ,YZ\right)   &
\rightarrow\left(  0,0,X\right)  /\left(  X^{2}\right)
\end{align*}
from left to right, and the lower three arrows are the same%
\[
X/\left(  X^{2}\right)  \rightarrow0
\]
Since $M$\ is microlinear, the diagram%
\[%
\begin{array}
[c]{ccccc}
&  &
\begin{array}
[c]{c}%
M^{D\left(  3\right)  }=\\
M^{\mathbf{D}\left(  \mathbf{R}\left[  X,Y,Z\right]  /\left(  X^{2}%
,Y^{2},Z^{2},XY,XZ,YZ\right)  \right)  }%
\end{array}
&  & \\
& \swarrow & \downarrow & \searrow & \\
M^{D}=M^{\mathbf{D}\left(  \mathbf{R}\left[  X\right]  /\left(  X^{2}\right)
\right)  } &  & M^{D}=M^{\mathbf{D}\left(  \mathbf{R}\left[  X\right]
/\left(  X^{2}\right)  \right)  } &  & M^{D}=M^{\mathbf{D}\left(
\mathbf{R}\left[  X\right]  /\left(  X^{2}\right)  \right)  }\\
& \searrow & \downarrow & \swarrow & \\
&  & M=M^{\mathbf{D}\left(  \mathbf{R}\right)  } &  &
\end{array}
\]
is a limit diagram, where the upper three arrows are%
\begin{align*}
&  M^{\mathbf{D}\left(  \left(  X,Y,Z\right)  /\left(  X^{2},Y^{2}%
,Z^{2},XY,XZ,YZ\right)  \rightarrow\left(  X,0,0\right)  /\left(
X^{2}\right)  \right)  }\\
&  M^{\mathbf{D}\left(  \left(  X,Y,Z\right)  /\left(  X^{2},Y^{2}%
,Z^{2},XY,XZ,YZ\right)  \rightarrow\left(  0,X,0\right)  /\left(
X^{2}\right)  \right)  }\\
&  M^{\mathbf{D}\left(  \left(  X,Y,Z\right)  /\left(  X^{2},Y^{2}%
,Z^{2},XY,XZ,YZ\right)  \rightarrow\left(  0,0,X\right)  /\left(
X^{2}\right)  \right)  }%
\end{align*}
from left to right, and the lower three arrows are the same%
\[
M^{\mathbf{D}\left(  X/\left(  X^{2}\right)  \rightarrow0\right)  }%
\]
Therefore we have%
\[
M^{D\left(  3\right)  }=M^{D}\times_{M}M^{D}\times_{M}M^{D}%
\]
It is now easy to see that the diagram%
\[%
\begin{array}
[c]{ccc}%
\begin{array}
[c]{c}%
M^{D}\times_{M}M^{D}\times_{M}M^{D}=M^{D\left(  3\right)  }=\\
M^{\mathbf{D}\left(  \mathbf{R}\left[  X,Y\right]  /\left(  X^{2},Y^{2}%
,Z^{2},XY,XZ,YZ\right)  \right)  }%
\end{array}
& \rightarrow &
\begin{array}
[c]{c}%
M^{\mathbf{D}\left(  \mathbf{R}\left[  X,Y\right]  /\left(  X^{2}%
,Y^{2},XY\right)  \right)  }=M^{D\left(  2\right)  }\\
=M^{D}\times_{M}M^{D}%
\end{array}
\\
\downarrow &  & \downarrow\\%
\begin{array}
[c]{c}%
M^{D}\times_{M}M^{D}=M^{D\left(  2\right)  }\\
=M^{\mathbf{D}\left(  \mathbf{R}\left[  X,Y\right]  /\left(  X^{2}%
,Y^{2},XY\right)  \right)  }%
\end{array}
& \rightarrow & M^{\mathbf{D}\left(  \mathbf{R}\left[  X\right]  /\left(
X^{2}\right)  \right)  }=M^{D}%
\end{array}
\]
is commutative, where the upper horizontal arrow is%
\[
M^{\mathbf{D}\left(  \left(  X,Y,Z\right)  /\left(  X^{2},Y^{2},Z^{2}%
,XY,XZ,YZ\right)  \rightarrow\left(  X,X,Y\right)  /\left(  X^{2}%
,Y^{2},XY\right)  \right)  }%
\]
the lower horizontal arrow is%
\[
M^{\mathbf{D}\left(  \left(  X,Y\right)  /\left(  X^{2},Y^{2},XY\right)
\rightarrow\left(  X,X\right)  /\left(  X^{2}\right)  \right)  }%
\]
the left vertical arrow is%
\[
M^{\mathbf{D}\left(  \left(  X,Y,Z\right)  /\left(  X^{2},Y^{2},Z^{2}%
,XY,XZ,YZ\right)  \rightarrow\left(  X,Y,Y\right)  /\left(  X^{2}%
,Y^{2},XY\right)  \right)  }%
\]
and the right vertical arrow is%
\[
M^{\mathbf{D}\left(  \left(  X,Y\right)  /\left(  X^{2},Y^{2},XY\right)
\rightarrow\left(  X,X\right)  /\left(  X^{2}\right)  \right)  }%
\]
We have just established the associativity of addition.

\item The proof of the distibutivity of scalar multiplication over
addition\ is divided into three steps:

\begin{enumerate}
\item The composition of the morphism%
\begin{align*}
\mathbf{D}\left(  \left(  XZ,YZ\right)  /\left(  X^{2},Y^{2},XY\right)
\leftarrow\left(  X,Y\right)  /\left(  X^{2},Y^{2},XY\right)  \right)  \times
M^{D\left(  2\right)  } &  :\\
D\left(  2\right)  \times\mathbb{R}\times M^{D\left(  2\right)  } &  =\\
\mathbf{D}\left(  \mathbf{R}\left[  X,Y\right]  /\left(  X^{2},Y^{2}%
,XY\right)  \right)  \times\mathbf{D}\left(  \mathbf{R}\left[  Z\right]
\right)  \times M^{D\left(  2\right)  } &  \rightarrow\\
\mathbf{D}\left(  \mathbf{R}\left[  X,Y\right]  /\left(  X^{2},Y^{2}%
,XY\right)  \right)  \times M^{D\left(  2\right)  } &  =D\left(  2\right)
\times M^{D\left(  2\right)  }%
\end{align*}
and the evaluation morphism%
\[
D\left(  2\right)  \times M^{D\left(  2\right)  }\rightarrow M
\]
is denoted by $\widehat{\psi}_{2}:D\left(  2\right)  \times\mathbb{R}\times
M^{D\left(  2\right)  }\rightarrow M$. Its transpose is denoted by $\psi
_{2}:\mathbb{R}\times M^{D\left(  2\right)  }\rightarrow M^{D\left(  2\right)
}$. And the composition of the morphism%
\begin{align*}
&  \mathbf{D}\left(  \left(  XZ_{1},YZ_{2}\right)  /\left(  X^{2}%
,Y^{2},XY\right)  \leftarrow\left(  X,Y\right)  /\left(  X^{2},Y^{2}%
,XY\right)  \right)  \times M^{D\left(  2\right)  }:D\left(  2\right)
\times\mathbb{R}\times\mathbb{R}\times M^{D\left(  2\right)  }=\\
&  \mathbf{D}\left(  \mathbf{R}\left[  X,Y\right]  /\left(  X^{2}%
,Y^{2},XY\right)  \right)  \times\mathbf{D}\left(  \mathbf{R}\left[
Z_{1},Z_{2}\right]  \right)  \times M^{D\left(  2\right)  }\rightarrow
\mathbf{D}\left(  \mathbf{R}\left[  X,Y\right]  /\left(  X^{2},Y^{2}%
,XY\right)  \right)  \times M^{D\left(  2\right)  }=\\
&  D\left(  2\right)  \times M^{D\left(  2\right)  }%
\end{align*}
and the evaluation morphism%
\[
D\left(  2\right)  \times M^{D\left(  2\right)  }\rightarrow M
\]
is denoted by $\widehat{\chi}:D\left(  2\right)  \times\mathbb{R}%
\times\mathbb{R}\times M^{D\left(  2\right)  }\rightarrow M$. Its transpose is
denoted by $\chi:\mathbb{R}\times\mathbb{R}\times M^{D\left(  2\right)
}\rightarrow M^{D\left(  2\right)  }$. It is easy to see that the diagram%
\[%
\begin{tabular}
[c]{lll}%
$\mathbb{R}\times M^{D\left(  2\right)  }$ &  & \\
$\downarrow$ & $\searrow$ & \\
$\mathbb{R}\times\mathbb{R}\times M^{D\left(  2\right)  }$ & $\longrightarrow$
& $M^{D\left(  2\right)  }$%
\end{tabular}
\ \ \ \ \ \
\]
commutes, where the vertical arrow is%
\begin{align*}
\mathbf{D}\left(  \left(  Z,Z\right)  \leftarrow\left(  Z_{1},Z_{2}\right)
\right)  \times M^{D\left(  2\right)  } &  :\mathbb{R}\times M^{D\left(
2\right)  }=\mathbf{D}\left(  \mathbf{R}\left[  Z\right]  \right)  \times
M^{D\left(  2\right)  }\rightarrow\\
\mathbf{D}\left(  \mathbf{R}\left[  Z_{1},Z_{2}\right]  \right)  \times
M^{D\left(  2\right)  } &  =\mathbb{R}\times\mathbb{R}\times M^{D\left(
2\right)  }%
\end{align*}
the horizontal arrow is%
\[
\chi:\mathbb{R}\times\mathbb{R}\times M^{D\left(  2\right)  }\rightarrow
M^{D\left(  2\right)  }%
\]
and the slant arrow is%
\[
\psi_{2}:\mathbb{R}\times M^{D\left(  2\right)  }\rightarrow M^{D\left(
2\right)  }%
\]
It is also easy to see that the morphism $\chi:\mathbb{R}\times\mathbb{R}%
\times M^{D\left(  2\right)  }\rightarrow M^{D\left(  2\right)  }$\ can be
defined to be%
\begin{align*}
\psi_{2}\times_{M}\psi_{2} &  :\mathbb{R}\times\mathbb{R}\times M^{D\left(
2\right)  }=\mathbb{R}\times\mathbb{R}\times\left(  M^{D}\times_{M}%
M^{D}\right)  =\\
\left(  \mathbb{R}\times M^{D}\right)  \times_{M}\left(  \mathbb{R}\times
M^{D}\right)   &  \rightarrow M^{D}\times_{M}M^{D}=M^{D\left(  2\right)  }%
\end{align*}

\item Let us consider the following diagram:%
\begin{equation}%
\begin{array}
[c]{ccccc}%
D\times\mathbb{R}\times M^{D} & \longleftarrow & D\times\mathbb{R}\times
M^{D\left(  2\right)  } & \longrightarrow & D\left(  2\right)  \times
\mathbb{R}\times M^{D\left(  2\right)  }\\
\downarrow & \fbox{1} & \downarrow & \fbox{2} & \downarrow\\
D\times M^{D} & \longleftarrow & D\times M^{D\left(  2\right)  } &
\longrightarrow & D\left(  2\right)  \times M^{D\left(  2\right)  }\\
& \searrow & \fbox{3} & \swarrow & \\
&  & M &  &
\end{array}
\label{t5.1.1}%
\end{equation}
where the upper two horizontal arrows are%
\begin{align*}
D\times\mathbb{R}\times\varphi &  :D\times\mathbb{R}\times M^{D\left(
2\right)  }\rightarrow D\times\mathbb{R}\times M^{D}\\
\mathbf{D}\left(  \left(  X,X\right)  /\left(  X^{2}\right)  \leftarrow\left(
X,Y\right)  /\left(  X^{2},Y^{2},XY\right)  \right)  \times\mathbb{R}\times
M^{D\left(  2\right)  }  &  :\\
D\times\mathbb{R}\times M^{D\left(  2\right)  }  &  =\mathbf{D}\left(
\mathbf{R}\left[  X\right]  /\left(  X^{2}\right)  \right)  \times
\mathbb{R}\times M^{D\left(  2\right)  }\\
&  \rightarrow\\
\mathbf{D}\left(  \mathbf{R}\left[  X,Y\right]  /\left(  X^{2},Y^{2}%
,XY\right)  \right)  \times\mathbb{R}\times M^{D\left(  2\right)  }  &
=D\left(  2\right)  \times\mathbb{R}\times M^{D\left(  2\right)  }%
\end{align*}
from left to right, the lower two horizontal arrow are%
\begin{align*}
D\times\varphi &  :D\times M^{D\left(  2\right)  }\rightarrow D\times M^{D}\\
\mathbf{D}\left(
\begin{array}
[c]{c}%
\left(  X,X\right)  /\left(  X^{2}\right)  \leftarrow\\
\left(  X,Y\right)  /\left(  X^{2},Y^{2},XY\right)
\end{array}
\right)  \times M^{D\left(  2\right)  }  &  :D\times M^{D\left(  2\right)
}=\mathbf{D}\left(  \mathbf{R}\left[  X\right]  /\left(  X^{2}\right)
\right)  \times M^{D\left(  2\right)  }\rightarrow\\
\mathbf{D}\left(  \mathbf{R}\left[  X,Y\right]  /\left(  X^{2},Y^{2}%
,XY\right)  \right)  \times M^{D\left(  2\right)  }  &  =D\left(  2\right)
\times M^{D\left(  2\right)  }%
\end{align*}
from left to right, the three vertical arrows are%
\begin{align*}
\mathbf{D}\left(  XY/\left(  X^{2}\right)  \leftarrow X/\left(  X^{2}\right)
\right)  \times M^{D}  &  :\\
D\times\mathbb{R}\times M^{D}  &  =\mathbf{D}\left(  \mathbf{R}\left[
X\right]  /\left(  X^{2}\right)  \right)  \times\mathbf{D}\left(
\mathbf{R}\left[  Y\right]  \right)  \times M^{D}\\
&  \rightarrow\mathbf{D}\left(  \mathbf{R}\left[  X\right]  /\left(
X^{2}\right)  \right)  \times M^{D}=D\times M^{D}\\
\mathbf{D}\left(  XY/\left(  X^{2}\right)  \leftarrow X/\left(  X^{2}\right)
\right)  \times M^{D\left(  2\right)  }  &  :\\
D\times\mathbb{R}\times M^{D\left(  2\right)  }  &  =\mathbf{D}\left(
\mathbf{R}\left[  X\right]  /\left(  X^{2}\right)  \right)  \times
\mathbf{D}\left(  \mathbf{R}\left[  Y\right]  \right)  \times M^{D\left(
2\right)  }\\
&  \rightarrow\\
\mathbf{D}\left(  \mathbf{R}\left[  X\right]  /\left(  X^{2}\right)  \right)
\times M^{D\left(  2\right)  }  &  =D\times M^{D\left(  2\right)  }\\
\mathbf{D}\left(
\begin{array}
[c]{c}%
\left(  XZ,YZ\right)  /\left(  X^{2},Y^{2},XY\right) \\
\leftarrow\left(  X,Y\right)  /\left(  X^{2},Y^{2},XY\right)
\end{array}
\right)  \times M^{D\left(  2\right)  }  &  :\\
D\left(  2\right)  \times\mathbb{R}\times M^{D\left(  2\right)  }  &  =\\
\mathbf{D}\left(  \mathbf{R}\left[  X,Y\right]  /\left(  X^{2},Y^{2}%
,XY\right)  \right)  \times\mathbf{D}\left(  \mathbf{R}\left[  Z\right]
\right)  \times M^{D\left(  2\right)  }  &  \rightarrow\\
\mathbf{D}\left(  \mathbf{R}\left[  X,Y\right]  /\left(  X^{2},Y^{2}%
,XY\right)  \right)  \times M^{D\left(  2\right)  }  &  =D\left(  2\right)
\times M^{D\left(  2\right)  }%
\end{align*}
from left to right, and the two slant arrows are the evaluation morphisms
$D\times M^{D}\rightarrow M$\ and $D\left(  2\right)  \times M^{D\left(
2\right)  }\rightarrow M$. In order to establish the commutativity of the
diagram (\ref{t5.1.1}), we will be engaged in the commutativity of the three
subdiagrams $\fbox{1}$, $\fbox{2}$\ and $\fbox{3}$\ in order. It is easy to
see that both the diagram $\fbox{1}$ and the digaram $\fbox{2}$\ commute. The
commutativity of the diagram $\fbox{1}$\ is a simple consequence of the fact
that $\left(  \_\right)  \times\left(  \_\right)  $\ is a bifunctor, while the
commutativity of the diagram $\fbox{2}$\ follows directly from that of the
following diagram%
\[%
\begin{tabular}
[c]{lll}%
$D\times\mathbb{R}$ & $\longrightarrow$ & $D\left(  2\right)  \times
\mathbb{R}$\\
$\downarrow$ &  & $\downarrow$\\
$D$ & $\longrightarrow$ & $D\left(  2\right)  $%
\end{tabular}
\ \ \ \ \ \ \ \ \ \ \ \ \ \ \
\]
where the two horizontal arrows are%
\begin{align*}
\mathbf{D}\left(  \left(  X,X\right)  /\left(  X^{2}\right)  \leftarrow\left(
X,Y\right)  /\left(  X^{2},Y^{2},XY\right)  \right)  \times\mathbb{R}  &  :\\
D\times\mathbb{R}  &  =\mathbf{D}\left(  \mathbf{R}\left[  X\right]  /\left(
X^{2}\right)  \right)  \times\mathbb{R}\\
&  \rightarrow\\
\mathbf{D}\left(  \mathbf{R}\left[  X,Y\right]  /\left(  X^{2},Y^{2}%
,XY\right)  \right)  \times\mathbb{R}  &  =D\left(  2\right)  \times
\mathbb{R}\\
\mathbf{D}\left(  \left(  X,X\right)  /\left(  X^{2}\right)  \leftarrow\left(
X,Y\right)  /\left(  X^{2},Y^{2},XY\right)  \right)   &  :D=\mathbf{D}\left(
\mathbf{R}\left[  X\right]  /\left(  X^{2}\right)  \right)  \rightarrow\\
\mathbf{D}\left(  \mathbf{R}\left[  X,Y\right]  /\left(  X^{2},Y^{2}%
,XY\right)  \right)   &  =D\left(  2\right)
\end{align*}
from top to bottom, and the two vertical arrows are%
\begin{align*}
\mathbf{D}\left(  XY/\left(  X^{2}\right)  \leftarrow X/\left(  X^{2}\right)
\right)   &  :\\
D\times\mathbb{R}  &  =\mathbf{D}\left(  \mathbf{R}\left[  X\right]  /\left(
X^{2}\right)  \right)  \times\mathbf{D}\left(  \mathbf{R}\left[  Y\right]
\right) \\
&  \rightarrow\mathbf{D}\left(  \mathbf{R}\left[  X\right]  /\left(
X^{2}\right)  \right)  =D\\
\mathbf{D}\left(
\begin{array}
[c]{c}%
\left(  XZ,YZ\right)  /\left(  X^{2},Y^{2},XY\right)  \leftarrow\\
\left(  X,Y\right)  /\left(  X^{2},Y^{2},XY\right)
\end{array}
\right)   &  :\\
D\left(  2\right)  \times\mathbb{R}  &  =\mathbf{D}\left(  \mathbf{R}\left[
X,Y\right]  /\left(  X^{2},Y^{2},XY\right)  \right)  \times\mathbf{D}\left(
\mathbf{R}\left[  Z\right]  \right) \\
&  \rightarrow\mathbf{D}\left(  \mathbf{R}\left[  X,Y\right]  /\left(
X^{2},Y^{2},XY\right)  \right)  =D\left(  2\right)
\end{align*}
from left to right. The commutativity of the diagram $\fbox{3}$\ follows from
the following commutative diagram of so-called parametrized adjunction (cf.
Theorem 3 in \S IV.7 of \cite{ma2}):%
\begin{equation}%
\begin{tabular}
[c]{lll}%
$\mathrm{Hom}_{\mathcal{K}}\left(  D\left(  2\right)  \times M^{D\left(
2\right)  },M\right)  $ & $\cong$ & $\mathrm{Hom}_{\mathcal{K}}\left(
M^{D\left(  2\right)  },M^{D\left(  2\right)  }\right)  $\\
$\downarrow$ & $\circlearrowleft$ & $\downarrow$\\
$\mathrm{Hom}_{\mathcal{K}}\left(  D\times M^{D\left(  2\right)  },M\right)  $
& $\cong$ & $\mathrm{Hom}_{\mathcal{K}}\left(  M^{D\left(  2\right)  }%
,M^{D}\right)  $\\
$\uparrow$ & $\circlearrowleft$ & $\uparrow$\\
$\mathrm{Hom}_{\mathcal{K}}\left(  D\times M^{D},M\right)  $ & $\cong$ &
$\mathrm{Hom}_{\mathcal{K}}\left(  M^{D},M^{D}\right)  $%
\end{tabular}
\ \ \ \ \ \ \ \ \ \ \ \ \label{t5.1.2}%
\end{equation}
where the left two vertical arrows are%
\begin{align*}
\mathrm{Hom}_{\mathcal{K}}\left(  \left(
\begin{array}
[c]{c}%
\mathbf{D}\left(
\begin{array}
[c]{c}%
\left(  X,X\right)  /\left(  X^{2}\right)  \leftarrow\\
\left(  X,Y\right)  /\left(  X^{2},Y^{2},XY\right)
\end{array}
\right)  :\\
D=\mathbf{D}\left(  \mathbf{R}\left[  X\right]  /\left(  X^{2}\right)
\right)  \rightarrow\\
\mathbf{D}\left(  \mathbf{R}\left[  X,Y\right]  /\left(  X^{2},Y^{2}%
,XY\right)  \right)  =D\left(  2\right)
\end{array}
\right)  \times M^{D\left(  2\right)  },M\right)   &  :\\
\mathrm{Hom}_{\mathcal{K}}\left(  D\left(  2\right)  \times M^{D\left(
2\right)  },M\right)   &  \rightarrow\mathrm{Hom}_{\mathcal{K}}\left(  D\times
M^{D\left(  2\right)  },M\right) \\
\mathrm{Hom}_{\mathcal{K}}\left(  D\times\varphi,M\right)   &  :\\
\mathrm{Hom}_{\mathcal{K}}\left(  D\times M^{D},M\right)   &  \rightarrow
\mathrm{Hom}_{\mathcal{K}}\left(  D\times M^{D\left(  2\right)  },M\right)
\end{align*}
from top to bottom, while the right vertical arrows are%
\begin{align*}
\mathrm{Hom}_{\mathcal{K}}\left(  M^{D\left(  2\right)  },\varphi\right)   &
:\mathrm{Hom}_{\mathcal{K}}\left(  M^{D\left(  2\right)  },M^{D\left(
2\right)  }\right)  \rightarrow\mathrm{Hom}_{\mathcal{K}}\left(  M^{D\left(
2\right)  },M^{D}\right) \\
\mathrm{Hom}_{\mathcal{K}}\left(  \varphi,M^{D}\right)   &  :\mathrm{Hom}%
_{\mathcal{K}}\left(  M^{D},M^{D}\right)  \rightarrow\mathrm{Hom}%
_{\mathcal{K}}\left(  M^{D\left(  2\right)  },M^{D}\right)
\end{align*}
from top to bottom. Choose%
\begin{align*}
\mathrm{id}_{M^{D\left(  2\right)  }}  &  \in\mathrm{Hom}_{\mathcal{K}}\left(
M^{D\left(  2\right)  },M^{D\left(  2\right)  }\right) \\
\mathrm{id}_{M^{D}}  &  \in\mathrm{Hom}_{\mathcal{K}}\left(  M^{D}%
,M^{D}\right)
\end{align*}
on the right of the diagram.(\ref{t5.1.2}). Then both yield the same morphism
in $\mathrm{Hom}_{\mathcal{K}}\left(  M^{D\left(  2\right)  },M^{D}\right)
$\ by application of their adjacent vertical arrows. The corresponding
morphism of $\mathrm{id}_{M^{D\left(  2\right)  }}$\ in $\mathrm{Hom}%
_{\mathcal{K}}\left(  D\left(  2\right)  \times M^{D\left(  2\right)
},M\right)  $\ is no other than the evaluation morphism $D\left(  2\right)
\times M^{D\left(  2\right)  }\rightarrow M$, and the corresponding morphism
of $\mathrm{id}_{M^{D}}$\ in $\mathrm{Hom}_{\mathcal{K}}\left(  D\times
M^{D},M\right)  $\ is no other than the evaluation morphism $D\times
M^{D}\rightarrow M$, Therefore both the evaluation morphisms $D\left(
2\right)  \times M^{D\left(  2\right)  }\rightarrow M$\ and $D\times
M^{D}\rightarrow M$\ yield the same morphism in $\mathrm{Hom}_{\mathcal{K}%
}\left(  D\times M^{D\left(  2\right)  },M\right)  $\ by application of their
adjacent vertical arrows, which is tantamount to the commutativity of the
diagram $\fbox{3}$. We have just established the commutativity of the whole
diagram (\ref{t5.1.1}). In particular, the outer hexagon of the diagram
(\ref{t5.1.1})\ is commutative, which means that the diagram%
\begin{equation}%
\begin{tabular}
[c]{lll}%
$D\times\mathbb{R}\times M^{D\left(  2\right)  }$ & $\longrightarrow$ &
$D\left(  2\right)  \times\mathbb{R}\times M^{D\left(  2\right)  }$\\
$\downarrow$ & $\circlearrowleft$ & $\downarrow$\\
$D\times\mathbb{R}\times M^{D}$ & $\longrightarrow$ & $M$%
\end{tabular}
\ \ \ \ \ \ \ \ \ \label{t5.1.4}%
\end{equation}
is commutative, where the two horizontal arrows are%
\begin{align*}
\mathbf{D}\left(
\begin{array}
[c]{c}%
\left(  X,X\right)  /\left(  X^{2}\right)  \leftarrow\\
\left(  X,Y\right)  /\left(  X^{2},Y^{2},XY\right)
\end{array}
\right)  \times\mathbb{R}\times M^{D\left(  2\right)  }  &  :\\
D\times\mathbb{R}\times M^{D\left(  2\right)  }  &  =\mathbf{D}\left(
\mathbf{R}\left[  X\right]  /\left(  X^{2}\right)  \right)  \times
\mathbb{R}\times M^{D\left(  2\right)  }\\
&  \rightarrow\\
\mathbf{D}\left(  \mathbf{R}\left[  X,Y\right]  /\left(  X^{2},Y^{2}%
,XY\right)  \right)  \times\mathbb{R}\times M^{D\left(  2\right)  }  &
=D\left(  2\right)  \times\mathbb{R}\times M^{D\left(  2\right)  }\\
\widehat{\psi}_{1}  &  :D\times\mathbb{R}\times M^{D}\rightarrow M
\end{align*}
from top to bottom, and the two vertical arrows are%
\begin{align*}
D\times\mathbb{R}\times\varphi &  :D\times\mathbb{R}\times M^{D\left(
2\right)  }\rightarrow D\times\mathbb{R}\times M^{D}\\
\widehat{\psi}_{2}  &  :D\left(  2\right)  \times\mathbb{R}\times M^{D\left(
2\right)  }\rightarrow M
\end{align*}
from left to right.

\item The following is a commutative diagram of parametrized adjunction (cf.
Theorem 3 in \S IV.7 of \cite{ma2}):%
\begin{equation}%
\begin{tabular}
[c]{lll}%
$\mathrm{Hom}_{\mathcal{K}}\left(  D\left(  2\right)  \times\mathbb{R}\times
M^{D\left(  2\right)  },M\right)  $ & $\cong$ & $\mathrm{Hom}_{\mathcal{K}%
}\left(  \mathbb{R}\times M^{D\left(  2\right)  },M^{D\left(  2\right)
}\right)  $\\
$\downarrow$ & $\circlearrowleft$ & $\downarrow$\\
$\mathrm{Hom}_{\mathcal{K}}\left(  D\times\mathbb{R}\times M^{D\left(
2\right)  },M\right)  $ & $\cong$ & $\mathrm{Hom}_{\mathcal{K}}\left(
\mathbb{R}\times M^{D\left(  2\right)  },M^{D}\right)  $\\
$\uparrow$ & $\circlearrowleft$ & $\uparrow$\\
$\mathrm{Hom}_{\mathcal{K}}\left(  D\times\mathbb{R}\times M^{D},M\right)  $ &
$\cong$ & $\mathrm{Hom}_{\mathcal{K}}\left(  \mathbb{R}\times M^{D}%
,M^{D}\right)  $%
\end{tabular}
\ \ \ \ \ \ \ \ \ \ \ \ \ \ \ \ \ \label{t5.1.3}%
\end{equation}
where the left two vertical arrows are%
\begin{align*}
\mathrm{Hom}_{\mathcal{K}}\left(  \left(
\begin{array}
[c]{c}%
\mathbf{D}\left(
\begin{array}
[c]{c}%
\left(  X,X\right)  /\left(  X^{2}\right)  \leftarrow\\
\left(  X,Y\right)  /\left(  X^{2},Y^{2},XY\right)
\end{array}
\right)  :\\
D=\mathbf{D}\left(  \mathbf{R}\left[  X\right]  /\left(  X^{2}\right)
\right)  \rightarrow\\
\mathbf{D}\left(  \mathbf{R}\left[  X,Y\right]  /\left(  X^{2},Y^{2}%
,XY\right)  \right)  =D\left(  2\right)
\end{array}
\right)  \times\mathbb{R}\times M^{D\left(  2\right)  },M\right)   &  :\\
\mathrm{Hom}_{\mathcal{K}}\left(  D\left(  2\right)  \times\mathbb{R}\times
M^{D\left(  2\right)  },M\right)   &  \rightarrow\\
&  \mathrm{Hom}_{\mathcal{K}}\left(  D\times\mathbb{R}\times M^{D\left(
2\right)  },M\right)  \\
\mathrm{Hom}_{\mathcal{K}}\left(  D\times\mathbb{R}\times\varphi,M\right)   &
:\\
\mathrm{Hom}_{\mathcal{K}}\left(  D\times\mathbb{R}\times M^{D},M\right)   &
\rightarrow\\
&  \mathrm{Hom}_{\mathcal{K}}\left(  D\times\mathbb{R}\times M^{D\left(
2\right)  },M\right)
\end{align*}
from top to bottom, while the right vertical arrows are%
\begin{align*}
\mathrm{Hom}_{\mathcal{K}}\left(  \mathbb{R}\times M^{D\left(  2\right)
},\varphi\right)   &  :\mathrm{Hom}_{\mathcal{K}}\left(  \mathbb{R}\times
M^{D\left(  2\right)  },M^{D\left(  2\right)  }\right)  \rightarrow
\mathrm{Hom}_{\mathcal{K}}\left(  \mathbb{R}\times M^{D\left(  2\right)
},M^{D}\right)  \\
\mathrm{Hom}_{\mathcal{K}}\left(  \mathbb{R}\times\varphi,M^{D}\right)   &
:\mathrm{Hom}_{\mathcal{K}}\left(  \mathbb{R}\times M^{D},M^{D}\right)
\rightarrow\mathrm{Hom}_{\mathcal{K}}\left(  \mathbb{R}\times M^{D\left(
2\right)  },M^{D}\right)
\end{align*}
from top to bottom. Choose%
\begin{align*}
\widehat{\psi}_{2} &  \in\mathrm{Hom}_{\mathcal{K}}\left(  D\left(  2\right)
\times\mathbb{R}\times M^{D\left(  2\right)  },M\right)  \\
\widehat{\psi}_{1} &  \in\mathrm{Hom}_{\mathcal{K}}\left(  D\times
\mathbb{R}\times M^{D},M\right)
\end{align*}
on the left of the diagram.(\ref{t5.1.3}). Then both yield the same morphism
in $\mathrm{Hom}_{\mathcal{K}}\left(  D\times\mathbb{R}\times M^{D\left(
2\right)  },M\right)  $\ by application of their adjacent vertical arrows by
dint of the commutativity of the diagram (\ref{t5.1.4}). The corresponding
morphism of $\widehat{\psi}_{2}$\ in $\mathrm{Hom}_{\mathcal{K}}\left(
\mathbb{R}\times M^{D\left(  2\right)  },M^{D\left(  2\right)  }\right)  $\ is
$\psi_{2}$, and the corresponding morphism of $\widehat{\psi}_{1}$\ in
$\mathrm{Hom}_{\mathcal{K}}\left(  \mathbb{R}\times M^{D},M^{D}\right)  $\ is
$\psi_{1}$, Therefore both $\psi_{2}$\ and $\psi_{1}$\ yield the same morphism
in $\mathrm{Hom}_{\mathcal{K}}\left(  \mathbb{R}\times M^{D\left(  2\right)
},M^{D}\right)  $\ by application of their adjacent vertical arrows, which is
tantamount to the commutativity of the following diagram:%
\begin{equation}%
\begin{tabular}
[c]{lll}%
$\mathbb{R}\times M^{D\left(  2\right)  }$ & $\longrightarrow$ & $M^{D\left(
2\right)  }$\\
$\downarrow$ & $\circlearrowleft$ & $\downarrow$\\
$\mathbb{R}\times M^{D}$ & $\longrightarrow$ & $M^{D}$%
\end{tabular}
\ \ \ \ \ \ \ \ \ \ \ \ \label{t5.1.5}%
\end{equation}
where the two horizontal arrows are%
\begin{align*}
\psi_{2} &  :\mathbb{R}\times M^{D\left(  2\right)  }\rightarrow M^{D\left(
2\right)  }\\
\psi_{1} &  :\mathbb{R}\times M^{D}\rightarrow M^{D}%
\end{align*}
from top to bottom, and the two vertical arrows are%
\begin{align*}
\mathbb{R}\times\varphi &  :\mathbb{R}\times M^{D\left(  2\right)
}\rightarrow\mathbb{R}\times M^{D}\\
\varphi &  :M^{D\left(  2\right)  }\rightarrow M^{D}%
\end{align*}
from left to right. We have just established the distibutivity of scalar
multiplication over addition.
\end{enumerate}
\end{itemize}
\end{proof}

\section{\label{s6}Concluding Remarks}

Weilology began with Andr\'{e} Weil's algebraic treatment of nilpotent
infinitesimals \cite{we1}. Its second step is synthetic differential geometry
\cite{koc1}and the study of Weil functors of Czech geometers \cite{kol1}. Its
third step is the author's axiomatic differential geometry (\cite{ni1}%
-\cite{ni8}). Now we have its final form in this paper.

A subsequent paper is devoted to fixing the syntax of Weil categories after
the manner of \cite{bel1}, under which we can develop axiomatic differential
geometry naively (i.e., without tears), just as Ren\'{e} Lavendhomme did for
synthetic differential geometry \cite{la1}. 

Another important point is that we can investigate Weilology for
supergeometry, braided geometry, noncommutative geometry, homotopical
differential geometry, arithmetical differential geometry and so on in the
same vein, which is the topic of subsequent papers.

\end{document}